\newtheorem{thm}{Theorem}[section]
\newtheorem{theorem}[thm]{Theorem}
\newtheorem{lemma}[thm]{Lemma}
\newtheorem{corollary}[thm]{Corollary}
\newtheorem{proposition}[thm]{Proposition}
\theoremstyle{definition}
\newtheorem{remark}[thm]{Remark}
\newtheorem{definition}[thm]{Definition}
\numberwithin{equation}{section}
\DeclareMathOperator{\Gal}{Gal}
\DeclareMathOperator{\Disc}{Disc}
\DeclareMathOperator{\NP}{NP}
\newcommand{\set}[1]{\left\lbrace #1 \right\rbrace}
\renewcommand{\a}{\boldsymbol{a}}
\renewcommand{\b}{\boldsymbol{b}}
\begin{document}

\title[Growth of points on hyperelliptic curves over number fields]{Growth of points on hyperelliptic curves over number fields}


\author{Christopher Keyes}
\address{Christopher Keyes\\
Emory University\\
Department of Mathematics\\
400 Dowman Dr.\\
Atlanta, GA 30322, USA}
\email{christopher.keyes@emory.edu}
\urladdr{\url{http://www.math.emory.edu/~ckeyes3/index.html}}

\subjclass[2000]{11G30, 12F05, 12E05}

\keywords{Arithmetic statistics, hyperelliptic curves, Diophantine stability}

\thanks{} 

\begin{abstract}
Fix a hyperelliptic curve $C/\mathbb{Q}$ of genus $g$, and consider the number fields $K/\mathbb{Q}$ generated by the algebraic points of $C$. In this paper, we study the number of such extensions with fixed degree $n$ and discriminant bounded by $X$. We show that when $g \geq 1$ and $n$ is sufficiently large relative to the degree of $C$, with $n$ even if $\deg C$ is even, there are $\gg X^{c_n}$ such extensions, where $c_n$ is a positive constant depending on $g$ which tends to $1/4$ as $n \to \infty$. This result builds on work of Lemke Oliver and Thorne who, in the case where $C$ is an elliptic curve, put lower bounds on the number of  extensions with fixed degree and bounded discriminant over which the rank of $C$ grows with specified root number.
\end{abstract}

\maketitle



\section{Introduction}
\label{sec:intro}

Let $C$ be a smooth projective curve over $\mathbb{Q}$ and fix an algebraic closure $\overline{\mathbb{Q}}$. We say a field $K/\mathbb{Q}$ is \textit{generated by a point of $C$} if $K = \mathbb{Q}(P)$ for some $P \in C(\overline{\mathbb{Q}})$. That is, $K$ is the minimal field of definition for an algebraic point on $C$. For $n \geq 1$ an integer and $X$ a positive real number, we define the quantity $N_{n,C}(X)$ to be the number of such extensions with degree $[K:\mathbb{Q}] = n$ and bounded absolute discriminant $\left|\Disc(K)\right| \leq X$. We also take $N_{n,C}(X, G)$ to be the number of those extensions with $\Gal(\widetilde{K}/\mathbb{Q}) \simeq G$, where $\widetilde{K}$ denotes the Galois closure of $K$.

In their paper on Diophantine Stability, Mazur and Rubin \cite{mazurrubin} ask to what extent the set of fields generated by algebraic points determines the identity of the curve $C$. Motivated by this question, we want to understand how $N_{n,C}(X)$ grows as $X \to \infty$, and how this asymptotic depends on both the geometry of $C$ and the degree $n$. When $C$ is an elliptic curve, Lemke Oliver and Thorne \cite{LOT} show there are $\gg X^{c_n - \epsilon}$ number fields $K/\mathbb{Q}$ of degree $n\geq 2$ and discriminant at most $X$, such that the Mordell--Weil rank of $C(K)$ is greater than that of $C(\mathbb{Q})$, and $C/K$ has specified root number. Here $c_n$ is a positive constant and tends to $1/4$ from below as $n \to \infty$.

In this paper, we consider the case where $C$ is a hyperelliptic curve. Recall a hyperelliptic curve $C/\mathbb{Q}$ is given by an affine equation
\[C \colon y^2 = f(x),\]
where $f(x) \in \mathbb{Q}[x]$. If $f(x)$ is separable then $C$ is nonsingular, and its genus $g$ is related to its degree $d = \deg f$ by
\[ d = \begin{cases} 2g + 1 & d \text{ is odd}\\ 2g+2 & d \text{ is even.} \end{cases}\]

Our main result is an asymptotic lower bound for $N_{n,C}(X, S_n)$ when $n$ is large relative to $d$, which generalizes that of Lemke Oliver and Thorne and recovers their bound when $g=1$. We treat the cases of $d$ odd and even separately in Theorems \ref{thm:odddeg} and \ref{thm:evendeg}. In both cases, the implied constants depend on the degree $n$ of the extension and the model $f$, and we are able to improve our results slightly when $n$ is allowed to be sufficiently large.

\begin{theorem}
\label{thm:odddeg}
	Let $C$ be a hyperelliptic curve with genus $g \geq 1$ and degree $d = 2g + 1$. If $n \geq d$, then
	\[N_{n,C}(X, S_n) \gg X^{c_n}\]
	where 
	\[ c_n = \frac{1}{4} - \frac{gn^2 - (g^2 - 2g - 3)n - 2g^2}{2n^2(n-1)} .\]
	Moreover, if $n$ is sufficiently large, we have the improvement
	\[c_n = \frac{1}{4} - \frac{gn + g^2 - 2g}{2n(n-1)}.\]
\end{theorem}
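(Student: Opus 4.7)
The plan is to generalize the construction of Lemke Oliver and Thorne from elliptic curves to hyperelliptic curves of odd degree. The central observation is that for any polynomial $y \in \Q[x]$ of sufficient degree, each irreducible factor $h$ of $y(x)^2 - f(x)$ defines a field $K = \Q[x]/(h)$ generated by a point of $C$: namely, the point $(\alpha, y(\alpha))$ where $\alpha$ is a root of $h$. To produce many such degree-$n$ fields, I would parametrize suitable pairs $(h, y)$, count them with a discriminant bound, and then ensure the Galois group is $S_n$.

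Concretely, for an auxiliary integer $k$ to be optimized (with $k \geq \lceil (d+1)/2 \rceil$), I would consider triples $(h, g, y) \in \Z[x]^3$ with $h$ monic of degree $n$, $g$ monic of degree $2k - n$, $y$ monic of degree $k$, and $y^2 = f + hg$. The set of such triples forms an algebraic variety whose integer points can be enumerated by choosing $g$ from a positive-density subfamily together with a cofactor $q$, so that $y = y_0 + gq$ for a fixed square root $y_0$ of $f$ modulo $g$ and $h = (y^2 - f)/g$ is automatically integral. With all parameters bounded by $T$, one expects $\gg T^k$ such triples.

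Next, I would track heights carefully to bound $H(h) \ll T^\beta$ for an explicit $\beta$, yielding $\abs{\Disc(K)} \leq \abs{\Disc(h)} \ll T^{2\beta(n-1)}$. A quantitative form of Hilbert's irreducibility theorem (e.g., Cohen's large sieve) applied to the $k$-dimensional family then ensures that a positive proportion of the $h$'s are irreducible with $\Gal(\wt{K}/\Q) \simeq S_n$, and a primitive-element argument bounds the overcounting of distinct fields. Setting $T^{2\beta(n-1)} = X$ and optimizing $k$ subject to the relevant constraints yields the claimed exponent $c_n$; the improvement for $n$ sufficiently large reflects correction terms becoming negligible in that range.

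I expect the main obstacle to be controlling the height of $h$ through the construction. Writing $h = (y_0^2 - f)/g + 2 y_0 q + g q^2$, the term $g q^2$ naively has height $O(T^3)$, which would yield $c_n \to 1/6$, short of the target $c_n \to 1/4$. Achieving the bound in the theorem requires a refined parametrization in which the subfamily of $g$'s is chosen so that $q$ (or equivalently the tail of $y$) has small effective height, making the dominant contribution to $H(h)$ only $O(T^2)$. The precise formula for $c_n$, and in particular its dependence on $g$ via $d = 2g+1$, emerges from balancing the resulting parameter count against this height bound and accounting for the density loss incurred by restricting $g$ to the admissible subfamily.
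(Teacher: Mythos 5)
Your approach differs fundamentally from the paper's, and the difference is exactly where your proposal breaks down. You parametrize the $y$-coordinate as a \emph{polynomial} $y(x)$ and extract degree-$n$ fields from irreducible factors $h$ of $y^2 - f$, which forces you to carry a cofactor $g$ with $hg = y^2 - f$. As you honestly observe, writing $h = (y_0^2-f)/g + 2y_0 q + gq^2$ exposes a cubic term $gq^2$ whose height $O(T^3)$ caps your exponent at $c_n \to 1/6$. The ``refined parametrization'' you invoke to push $H(h)$ down to $O(T^2)$ is precisely the missing idea: you assert it should exist but give no mechanism, and within the polynomial-$y$/cofactor framework it is not clear one does. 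This is a genuine gap, not a technicality.

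The paper avoids the problem at the outset by parametrizing the $y$-coordinate as a \emph{rational function} $g(x)/h(x)$, so that the degree-$n$ polynomial is $F = g^2 - fh^2$ itself, not a factor of something larger. With $\deg g \approx n/2$, $\deg h \approx (n-d)/2$, and \emph{weighted} coefficient bounds $|a_{n/2-i}| \le Y^i$, $|b_{(n-d-1)/2-j}| \le Y^{j+1/2}$, one gets (via a Fujiwara-type root bound, Lemma~\ref{lem:roots_coeff_bound}) that every root of $F$ has modulus $\ll Y$, hence $|\Disc F| \ll Y^{n(n-1)}$, while the parameter count is $\asymp Y^{c}$ with $c \sim n^2/4$. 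No cofactor, no cubic term, no density restriction on $g$ for $f$ to be a square modulo $g$. This is what makes $c_n \to 1/4$ attainable. Your framework also has a secondary unaddressed issue: the constraint that $f$ be a quadratic residue modulo $g$ is not obviously satisfied by a positive-density family of $g$'s, and quantifying that density would affect your exponent.

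Two smaller points. First, controlling multiplicity is more delicate than ``a primitive-element argument'': the paper needs both a fibre bound on $(g,h)$ yielding the same $F$ (Lemma~\ref{lem:poly_mult}, which uses that $\C[x,y]/(y^2-f)$ is Dedekind) and the Lemke Oliver--Thorne discriminant-dependent bound $M_K(Y) \ll \max(Y^n|\Disc K|^{-1/2}, Y^{n/2})$, together with a partial-summation argument showing small-discriminant fields contribute negligibly. Second, the improved exponent for large $n$ is \emph{not} a matter of ``correction terms becoming negligible''; it comes from replacing Schmidt's $N_n(X) \ll X^{(n+2)/4}$ with the Ellenberg--Venkatesh bound $N_n(X) \ll X^{\frac{4m}{n-2}\binom{r+4m}{r}}$, which for large $n$ is strong enough to let one use the sharper multiplicity bound $M_K(Y) \ll Y^{n/2}$ across the full discriminant range.
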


Theorem \ref{thm:odddeg} applies whenever $C$ has a rational Weierstrass point, as we can choose an equation for $C$ with $d$ odd. In the general case, when $d$ is even, we restrict our attention to even $n$. This turns out to be a necessary restriction because a positive proportion of hyperelliptic curves over $\mathbb{Q}$ will have no points over any odd degree extensions, a result due to Bhargava, Gross, and Wang \cite{BGW}. After making this restriction, we obtain a similar asymptotic bound to Theorem \ref{thm:odddeg}.

\begin{theorem}
\label{thm:evendeg}
	Let $C$ be a hyperelliptic curve with genus $g \geq 1$ and degree $d = 2g + 2$. If $n \geq d+2$ is even, then 
	\[N_{n,C}(X, S_n) \gg X^{c_n}\]
	where
	\[c_n = \frac{1}{4} - \frac{(1 + 2g)n^2 - (2g^2 - 2g - 8)n - (4g^2 + 4g)}{4n^2(n-1)}.\]
	Moreover, when $n$ is sufficiently large, we have the improvement
	\[c_n = \frac{1}{4}  - \frac{(1 + 2g)n - 2g^2 + 2g + 2}{4n(n-1)}.\]
\end{theorem}

\begin{remark}\label{rem:const}
In both cases, the exponent $c_n$ tends to 1/4 from below as $n \to \infty$. If $d > 7$ is odd then $c_n$ is positive for all $n \geq d$. Similarly, if $d \geq 4$ is even then $c_n$ is positive for all $n \geq d+2$. We discuss how to find the threshold where the improved exponent applies and give examples in Section \ref{sec:improve}.
\end{remark}

We contrast Theorems \ref{thm:odddeg} and \ref{thm:evendeg} with a result of Granville \cite{granvilletwists} for quadratic twists of hyperelliptic curves, which tells a very different story for quadratic extensions. Granville proved, assuming the $abc$-conjecture, that when $g \geq 2$, the number of squarefree $d$ such that $|d| \leq D$ and the quadratic twist \[C_d \colon dy^2 = f(x)\] has a nontrivial rational point is $\ll D^{1/(g-1) + o(1)}$. Here, nontrivial refers to points which don't arise from roots of $f(x)$ or points at infinity. Such points on twists give rise to points in $C(\mathbb{Q}(\sqrt{d}))$, suggesting an \textit{upper} bound on $N_{2,C}(X)$ with vanishing exponent as $g \to \infty$.

The proofs of Theorems \ref{thm:odddeg} and \ref{thm:evendeg} employ a similar strategy as that used by Lemke Oliver and Thorne for elliptic curves and large degree fields. The approach is to produce a family of polynomials whose roots give rise to points on $C$. We will contrive this family to consist almost entirely of irreducible polynomials of the desired degree $n$ and Galois group $S_n$. Then we count the family, adjusting for multiplicity of the fields generated, to give a lower bound for $N_{n,C}(X)$.

In Section \ref{sec:group_galois}, we recall the necessary Galois theory to justify using specializations to study irreducibility and Galois groups in polynomial families. We then introduce Newton polygons as a tool to determine Galois groups of polynomials. We apply these results in Section \ref{sec:galois_families} to specific families to show that they are populated by irreducible polynomials with Galois group $S_n$. In Section \ref{sec:intermediates}, we state and prove a useful lemma relating the size of a polynomial’s roots to its coefficients.

These ingredients are assembled in Section \ref{sec:odd} into the proofs of Theorems \ref{thm:odddeg} and \ref{thm:evendeg}. Here, we count specializations of our polynomial families while controlling multiplicity. We show that the contribution by fields with small discriminant is negligible, which improves our final lower bounds slightly. We make further improvements when $n$ is sufficiently large by applying the best known upper bounds on the number of  fixed degree number fields with bounded discriminant due to Lemke Oliver and Thorne \cite{LOT_upper_bounds}.

\subsection*{Acknowledgments}
	The author would like to thank Robert Lemke Oliver for suggesting the topic and for his invaluable guidance, as well as Lea Beneish, Jackson Morrow, and David Zureick-Brown for their helpful comments on previous drafts. The author also thanks the referee for carefully reading the manuscript and making suggestions which improved the paper.
	
\section{Irreducibility and Galois groups in families of polynomials}
\label{sec:group_galois}

\subsection{Hilbert's irreducibility theorem}

A parameterized family of polynomials over $\mathbb{Q}$ is given by a polynomial $f(\boldsymbol{t},x) \in \mathbb{Q}(\boldsymbol{t})[x]$, where $\boldsymbol{t} = (t_1, \ldots t_k)$. If $\boldsymbol{t}_0 \in \mathbb{Q}^k$, then $f_{\boldsymbol{t}_0} = f(\boldsymbol{t}_0,x) \in \mathbb{Q}[x]$ is a \textit{specialization} of $f$. We would like to understand how the irreducibility of $f$ over $\mathbb{Q}(\boldsymbol{t})$ is related to that of its specializations $f_{\boldsymbol{t}_0}$ over $\mathbb{Q}$. Moreover, when $f_{\boldsymbol{t}_0}$ is irreducible, we would like to relate its Galois group $G_{\boldsymbol{t}_0}$ to that of $f$. 

Keeping the notation above, suppose $f$ is irreducible over $\mathbb{Q}(\boldsymbol{t})$. Then the field $K = \mathbb{Q}(\boldsymbol{t})[x]/f(\boldsymbol{t},x)$ is a finite extension of $\mathbb{Q}(\boldsymbol{t})$ of degree $n = \deg f$. The Galois closure of $K/\mathbb{Q}(\boldsymbol{t})$ is denoted $\widetilde{K}$, allowing us to define the Galois group $G = \Gal(f/\mathbb{Q}(\boldsymbol{t})) = \Gal(\widetilde{K}/\mathbb{Q}(\boldsymbol{t}))$. Let $g(\boldsymbol{t}, x) \in \mathbb{Q}(\boldsymbol{t},x)$ generate the extension $\widetilde{K}$, that is $\widetilde{K} \simeq \mathbb{Q}(\boldsymbol{t})[x]/g(\boldsymbol{t},x)$. Again, we use $g_{\boldsymbol{t}_0}$ to denote the specialization $g(\boldsymbol{t}_0,x) \in \mathbb{Q}[x]$.

\begin{theorem}[Hilbert irreducibility]
\label{thm:hilbert_irred}
	Using the notation above, suppose $\boldsymbol{t}_0 \in \mathbb{Q}^k$ is such that $g_{\boldsymbol{t}_0}$ is irreducible over $\mathbb{Q}$. Then the permutation representations of $G$ and $G_{\boldsymbol{t}_0}$ acting on the roots of $f$ and $f_{\boldsymbol{t}_0}$ are isomorphic.
	
	Moreover, the proportion of specializations $g_{\boldsymbol{t}_0}$ which are irreducible is $1-o_H(1)$ for $\boldsymbol{t}_0$ in any rectangular region in $\mathbb{Z}^k$ having shortest side length $H$.
\end{theorem}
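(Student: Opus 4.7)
The plan is to separate the statement into two independent claims: an algebraic one about the structure of the Galois action under specialization, and an analytic one about the density of irreducible specializations.

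For the first assertion, the approach is to use the fact that $g(\t,x)$ generates the Galois closure $\wt{K}$ of $K/\Q(\t)$. First I would fix a root $\beta$ of $g(\t,x)$ in $\wt{K}$ and write each root $\alpha_i$ of $f(\t,x)$ as $\alpha_i = \phi_i(\t,\beta)$ for some $\phi_i \in \Q(\t)[x]$; this is possible because $\wt{K} = \Q(\t)(\beta)$ contains a full splitting field of $f$. The Galois group $G = \Gal(\wt{K}/\Q(\t))$ acts regularly on the roots of $g$, and the induced action on $\{\alpha_i\}$ via the $\phi_i$ is precisely the permutation representation of $G$ on the roots of $f$. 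Now I would specialize at $\t_0$: since $g_{\t_0}$ is irreducible over $\Q$ it is separable (we are in characteristic zero) and its $|G|$ roots are indexed by $G_{\t_0}$ acting regularly, with $|G_{\t_0}| = \deg g_{\t_0} = |G|$. Substituting $\t \mapsto \t_0$ into $\phi_i(\t,\beta)$ therefore produces a $G$-equivariant bijection between the roots of $f$ and those of $f_{\t_0}$, yielding the isomorphism of permutation representations. One has to avoid the thin locus where the denominators in the $\phi_i$ or the leading coefficient of $g$ vanish, but this cuts out only a lower-dimensional subvariety of the parameter space and so is consistent with the density claim.

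For the second assertion, the plan is to invoke classical Hilbert irreducibility in its quantitative form. The set of $\t_0 \in \Z^k$ at which $g(\t_0,x)$ becomes reducible over $\Q$ is, by construction, a thin set in the sense of Serre, because $g$ is irreducible over $\Q(\t)$. Standard sieve-based counting (essentially Cohen's theorem, or the treatment in Serre's \emph{Lectures on the Mordell--Weil Theorem}) gives a power-saving bound $O(H^{k-1/2} \log H)$ on the number of integer points of a thin set inside a rectangular box with shortest side $H$ (and volume $\Theta(H^k)$ up to the aspect ratio of the box). Dividing by the total number of lattice points in the box yields the claimed $o_H(1)$ bound on the proportion of reducible specializations.

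The main obstacle is the quantitative piece: the first part is a fairly formal manipulation, but the uniform thin-set estimate rests on sieve input that I would not reprove. Instead I would cite a standard reference for the quantitative Hilbert irreducibility theorem and only verify that the hypothesis --- irreducibility of $g$ over $\Q(\t)$ --- is what guarantees the relevant set is thin. A secondary technical point is handling the case where the box is very anisotropic (one side much longer than $H$); the thin-set count behaves well as long as the shortest side tends to infinity, which is what the hypothesis provides.
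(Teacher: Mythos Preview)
The paper does not prove this theorem: it calls the density statement classical and refers to \cite[Theorem~4.1]{LOT} for the isomorphism of permutation representations. Your two-part sketch is essentially the standard argument that sits behind that reference, so there is no substantive divergence to compare.

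One point in your first half is worth tightening. You write that irreducibility of $g_{\t_0}$ gives $|G_{\t_0}| = \deg g_{\t_0} = |G|$, but by definition $G_{\t_0} = \Gal(f_{\t_0}/\Q)$, not $\Gal(g_{\t_0}/\Q)$. Irreducibility of $g_{\t_0}$ yields only $[\Q(\beta_0):\Q] = |G|$; to finish you must check that $\Q(\beta_0)$ is exactly the splitting field of $f_{\t_0}$. One containment comes from your $\phi_i$, and the reverse comes from the fact that $\beta$ already lies in the splitting field of $f$ over $\Q(\t)$ (this is how $\wt{K}$ was defined), a relation that also specializes away from a thin locus. Once that is in place, the decomposition-group map at the place $\t_0$ supplies the actual group isomorphism $G \xrightarrow{\sim} G_{\t_0}$; as written, your phrase ``$G$-equivariant bijection'' presupposes an identification of $G$ with $G_{\t_0}$ that you have not yet constructed. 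The thin-set count you cite for the second half is the correct input and matches what the paper means by ``classical.''
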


The fact that almost all specializations of an irreducible polynomial are irreducible is classical. For a proof of the isomorphism of permutation representations, we refer the reader to \cite[Theorem 4.1]{LOT}. Theorem \ref{thm:hilbert_irred} tells us that once we know $f$, and hence $g$, is irreducible over $\mathbb{Q}(\boldsymbol{t})$, then 100\% of its integral specializations $f_{\boldsymbol{t}_0}$ are irreducible over $\mathbb{Q}$ with Galois groups $G \simeq G_{\boldsymbol{t}_0}$ isomorphic as permutation groups.

From Theorem \ref{thm:hilbert_irred} we obtain the following corollary, also appearing in \cite[Corollary 4.2]{LOT}. 

\begin{corollary}
\label{cor:hilbert_cor}
	Suppose $f(\boldsymbol{t}, x)$ is irreducible over $\mathbb{Q}(\boldsymbol{t})$. If the permutation representation of $G_{\boldsymbol{t}_0}$ contains an element of a given cycle type for a positive proportion of $\boldsymbol{t}_0 \in \mathbb{Z}^k$, then the permutation representation of $G$ must also contain an element of that type.
\end{corollary}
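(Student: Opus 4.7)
The plan is to combine Theorem \ref{thm:hilbert_irred} with a simple density/pigeonhole argument. Let $S \subseteq \Z^k$ denote the set of $\t_0$ such that the permutation representation of $G_{\t_0}$ contains an element of the specified cycle type; by hypothesis, $S$ has positive density $\delta > 0$ inside any sufficiently large rectangular region $R$ with shortest side length $H$. On the other hand, Theorem \ref{thm:hilbert_irred} guarantees that the set $T$ of $\t_0 \in R$ for which $g_{\t_0}$ is irreducible over $\Q$ accounts for a proportion $1 - o_H(1)$ of $R$. For $H$ sufficiently large, the density of $S \cap T$ in $R$ is at least $\delta - o_H(1) > 0$, so $S \cap T$ is nonempty.

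Pick any $\t_0 \in S \cap T$. By membership in $S$, the group $G_{\t_0}$ contains an element of the given cycle type. By membership in $T$, the specialization $g_{\t_0}$ is irreducible, so the second conclusion of Theorem \ref{thm:hilbert_irred} applies: the permutation representations of $G$ and $G_{\t_0}$ on the roots of $f$ and $f_{\t_0}$, respectively, are isomorphic as permutation groups. Since any isomorphism of permutation groups carries elements to elements of the same cycle type, $G$ must also contain an element of the specified cycle type, finishing the proof.

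I don't anticipate any real obstacle. The only subtlety is the bookkeeping: one needs to insist that the positive-density set $S$ and the density-one set $T$ are measured in a common rectangular region so that Theorem \ref{thm:hilbert_irred} may be applied and nonempty intersection is guaranteed. Beyond that, the corollary is a formal consequence of Theorem \ref{thm:hilbert_irred} together with the basic fact that cycle type is an invariant of a permutation-group isomorphism.
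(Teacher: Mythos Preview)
Your argument is correct and is essentially identical to the paper's: both pick a large rectangular region so that the density-one set of specializations with $G_{\t_0}\simeq G$ (from Theorem~\ref{thm:hilbert_irred}) must intersect the positive-density set of specializations exhibiting the given cycle type, and then transfer the cycle type along the permutation isomorphism. (Minor wording slip: it is the \emph{first} conclusion of Theorem~\ref{thm:hilbert_irred} you are invoking once $g_{\t_0}$ is irreducible, not the second.)
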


\begin{proof}
	By Theorem \ref{thm:hilbert_irred}, choose a sufficiently large rectangular region in $\mathbb{Z}^k$ such that the proportion of $\boldsymbol{t}_0$ for which $G$ and $G_{\boldsymbol{t}_0}$ do not have isomorphic permutation representations is smaller than the proportion of $\boldsymbol{t}_0$ for which $G_{\boldsymbol{t}_0}$ contains an element of the given cycle type. Then there is some $\boldsymbol{t}_0$ in the region such that both $G \simeq G_{\boldsymbol{t}_0}$ and $G_{\boldsymbol{t}_0}$ contains the given cycle type, thus $G$ must contain that cycle type.
\end{proof}	

The upshot of Corollary \ref{cor:hilbert_cor} is that we need only prove that a positive proportion of integral specializations $f_{\boldsymbol{t}_0}$ have Galois group $G_{\boldsymbol{t}_0}$ containing a given cycle type to see that $G$ does. Then by another application of Theorem \ref{thm:hilbert_irred}, 100\% of specializations contain an element with the given type. In particular, if the presence of certain cycle types in $G$ implies that $G$ is the full symmetric group $S_n$, then we need only find that positive proportions of specializations $f_{\boldsymbol{t}_0}$ have each of these types to see that 100\% of specializations have Galois group $S_n$.

\subsection{Recognizing the symmetric group}
Let $S_n$ denote the symmetric group acting on the set $\left\{1, \ldots, n\right\}$, and let $G \subseteq S_n$ be a permutation subgroup. Recall that $G$ is a \textit{transitive} subgroup if for all pairs $i,j$ with $1 \leq i,j \leq n$, there exists an element $\sigma \in G$ for which $\sigma(i) = j$. We will describe several ways to detect that a transitive subgroup $G$ is isomorphic to the full symmetric group $S_n$ using the presence of certain cycle types.

\begin{lemma}[Lifting transitive subgroups]
\label{lem:transitive_lifting}
	Let $G \subseteq S_n$ be a transitive permutation subgroup on the set $\set{1, \ldots, n}$. Assume $G$ contains a subgroup $H$ which is isomorphic as a permutation subgroup to $S_k$ for some $k > n/2$. Then $G \simeq S_n$.
\end{lemma}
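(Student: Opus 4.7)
The plan is to show $G$ contains the full symmetric group by building a chain of subgroups $H = H_0 \subseteq H_1 \subseteq \cdots \subseteq G$, where each $H_j$ acts as the full symmetric group on a subset $A_j \subseteq \{1,\ldots,n\}$ containing the support of $H$, and where the $A_j$ strictly grow until they exhaust $\{1,\ldots,n\}$.

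After relabeling, I may assume $H$ permutes $A_0 := \{1,\ldots,k\}$ as the full symmetric group and fixes $\{k+1,\ldots,n\}$ pointwise. Inductively, suppose $H_j \subseteq G$ acts as $S_{|A_j|}$ on $A_j$ with $|A_j| \geq k$ and fixes the complement pointwise. If $A_j = \{1,\ldots,n\}$ then $H_j = S_n$ and we are done. Otherwise pick $x \notin A_j$ and, using that $G$ is transitive, choose $g \in G$ with $g(a) = x$ for some $a \in A_0$, so that $x \in g(A_0)$.

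The crucial step is to show that $H_{j+1} := \langle H_j, gHg^{-1}\rangle$ acts as the full symmetric group on $A_{j+1} := A_j \cup g(A_0)$ and fixes its complement. The complement lies in both $A_j^c$ and $g(A_0)^c$, so both generators of $H_{j+1}$ fix it. For the action on $A_{j+1}$, the hypothesis $k > n/2$ together with $|A_j| \geq k$ gives $|A_j| + |g(A_0)| > n$, so one may pick a point $p \in A_j \cap g(A_0)$. Then $H_j$ contains every transposition $(p,y)$ with $y \in A_j \setminus \{p\}$, and $gHg^{-1}$ contains every transposition $(p,z)$ with $z \in g(A_0)\setminus\{p\}$. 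Since transpositions sharing a common letter generate the full symmetric group on the union of their supports, $H_{j+1}$ acts as $S_{|A_{j+1}|}$ on $A_{j+1}$.

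Because $x \in A_{j+1} \setminus A_j$, the sets strictly increase, so the procedure terminates in at most $n-k$ steps, at which point $H_j = S_n \subseteq G$, forcing $G = S_n$. The principal content of the argument is the intersection bound $A_j \cap g(A_0) \neq \emptyset$, which is exactly what the hypothesis $k > n/2$ supplies; the rest is a routine induction, with transitivity of $G$ used only to guarantee that a suitable $g$ exists at each step.
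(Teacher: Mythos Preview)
Your proof is correct and follows essentially the same approach as the paper's: both arguments grow a copy of $S_k$ acting on $\{1,\ldots,k\}$ to larger and larger symmetric subgroups by an induction that uses transitivity to reach a new point and the hypothesis $k>n/2$ to guarantee overlap with the previously controlled set. The only cosmetic difference is that the paper conjugates a single transposition $(1\ a)$ to add one new point at a time, whereas you conjugate all of $H$ and may add several points at once; the key overlap-plus-transpositions mechanism is identical.
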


\begin{proof}
	The result is clearly true if $k = n$, so assume $k < n$. After renumbering if necessary, we may assume $H$ acts nontrivially on $\set{1, \ldots, k}$ and acts trivially on $\set{k+1, \ldots, n}$. In particular, $G$ contains the transpositions $(1 \ a)$ for $2 \leq a \leq k$. Let $\sigma_j \in G$ be an element such that $\sigma_j(1) = j$, which exists by the transitivity of $G$. Then $\sigma_j(a)$ takes on $k-1$ different values for $2 \leq a \leq k$, none of which are equal to $j$. 

	Set $j = k+1$, so we have that $\sigma_{k+1}(a)$ takes $k-1$ distinct values when $2 \leq a \leq k$, none of which are equal to $k+1$. The hypothesis that $k > n/2$ is equivalent to $k-1 > n-k-1$, so by the pigeonhole principle, there exists at least one such $a$ for which $\sigma_{k+1}(a) \leq k$. Conjugating $(1 \ a)$ by $\sigma_{k+1}$, we see that $G$ contains the transposition $(\sigma_{k+1}(a) \ k+1)$. Together with the subgroup $H$, this transposition generates a subgroup of $G$ isomorphic to $S_{k+1}$. Finitely many applications of this procedure show $G \simeq S_n$.
\end{proof}

\begin{proposition}
\label{prop:gen_sets}
	Suppose $G \subseteq S_n$ is a transitive permutation subgroup containing a transposition, $\tau$, and a cycle, $\sigma$, of length $n-1$ or length $p > n/2$ for $p$ a prime. Then $G \simeq S_n$.
\end{proposition}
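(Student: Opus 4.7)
The plan is to exploit the transitivity of $G$ to conjugate $\tau$ into a position with useful relationship to the support of $\sigma$, and then to enlarge the supply of transpositions via conjugation by powers of $\sigma$. In both cases, the goal is to produce either enough transpositions in $G$ to generate $S_n$ outright, or else to realize a copy of $S_k$ inside $G$ with $k > n/2$ so that Lemma \ref{lem:transitive_lifting} applies.

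First I would treat the case where $\sigma$ is an $(n-1)$-cycle. Let $v$ denote its unique fixed point and $S = \{1, \ldots, n\} \setminus \{v\}$ its support. Writing $\tau = (a,b)$, transitivity of $G$ produces some $g \in G$ with $g(a) = v$, and since $a \neq b$ we have $g(b) \in S$. Then $\tau' := g\tau g^{-1} = (v, c)$ for some $c \in S$ lies in $G$, and the conjugates $\sigma^k \tau' \sigma^{-k} = (v, \sigma^k(c))$ realize every transposition $(v, u)$ with $u \in S$ as $k$ ranges over $\{0, 1, \ldots, n-2\}$. These "star" transpositions already generate $S_n$, so $G = S_n$ in this case with no need for Lemma \ref{lem:transitive_lifting}.

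Next I would handle the prime case. Suppose $\sigma$ is a $p$-cycle with support $S$ of size $p > n/2$. Transitivity again supplies a conjugate $\tau' = (u, w) \in G$ with $u \in S$. If $w \in S$, then every conjugate $\sigma^k \tau' \sigma^{-k}$ is a transposition of two points of $S$, and since $p$ is prime the cyclic "step" between these two points is automatically coprime to $p$; a standard argument then shows that $\sigma$ together with $\tau'$ generate the full symmetric group $S_p$ on $S$, and $p > n/2$ lets Lemma \ref{lem:transitive_lifting} conclude $G \simeq S_n$. If instead $w \notin S$, then $\sigma^k \tau' \sigma^{-k} = (\sigma^k(u), w)$ ranges over $p$ transpositions of the form $(u', w)$ for $u' \in S$, which generate $S_{p+1}$ on $S \cup \{w\}$; since $p+1 > n/2$, Lemma \ref{lem:transitive_lifting} again yields $G \simeq S_n$.

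The main obstacle is the subcase in which the conjugated transposition $\tau'$ has both endpoints in $S$: here one must invoke the classical fact that a prime-length cycle together with any transposition supported on its cycle generate the full symmetric group on that support. The primality of $p$ is essential for this, and is the reason the hypothesis is stated in terms of a prime rather than an arbitrary cycle of length greater than $n/2$. Everything else reduces to bookkeeping with conjugation and a direct appeal to Lemma \ref{lem:transitive_lifting}.
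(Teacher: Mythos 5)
Your proof is correct and follows essentially the same strategy as the paper's: conjugate $\tau$ so that one endpoint lies in (or off) the support of $\sigma$, generate further transpositions by conjugating with powers of $\sigma$, and in the prime case produce a subgroup isomorphic to $S_p$ or $S_{p+1}$ and invoke Lemma \ref{lem:transitive_lifting}. The only cosmetic difference is that you cite the classical fact that a prime-length cycle plus a transposition in its support generate the full symmetric group, whereas the paper spells out the renumbering argument via a suitable power of $\sigma$; both hinge on the same primality hypothesis.
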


\begin{proof}
	Suppose first that $\sigma$ has length $n-1$ and renumber so that $\sigma$ is given by $(1 \ \ldots \ n-1)$ in cycle notation. Write $\tau = (a \ b)$. Since $G$ is transitive, we can conjugate $\tau$ by some element of $G$ to produce a transposition $(n \ c)$ where $1 \leq c \leq n-1$. Conjugation of $(n \ c)$ by powers of $\sigma$ produces $\set{(n \ d) \mid 1 \leq d \leq n-1} \subseteq G$, which is a generating set for $S_n$. Hence $G \simeq S_n$.
	
	Now suppose that $\sigma$ has length $p$ for some prime $p > n/2$, and again renumber so $\sigma = (1 \ \ldots \ p)$. Conjugating $\tau$ produces a transposition $(1 \ b) \in G$ for some $b$. Suppose that $b > p$. Then conjugation of $(1 \ b)$ by powers of $\sigma$ produces the subset $\set{(a \ b) \mid 1 \leq a \leq p} \subseteq G$, which generates a subgroup $H \subseteq G$ isomorphic to $S_{p+1}$ acting on $\set{1, \ldots, p, b}$. 
	
	If instead $1 < b \leq p$, then $\sigma^i(1) = (1 \ b \ \ldots)$ is a $p$-cycle for some $1 \leq i < p$. We may renumber again such that $b=2$, making our transposition $(1 \ 2)$, and $\sigma^i = (1 \ 2 \ \ldots \ p)$. Conjugating $(1 \ 2)$ by powers of $\sigma^i$, we obtain $\set{(c \ c+1) \mid 1 \leq c \leq p-1} \subseteq G$, which is a generating set for $S_p$ acting on $\set{1, \ldots, p}$.
		
	In either case, we have shown that there exists a permutation subgroup $H \subseteq G$ such that $H \simeq S_k$ for $k = p$ or $k = p+1$. Since $p > n/2$, the hypothesis of Lemma \ref{lem:transitive_lifting} applies, so we may conclude $G \simeq S_n$.
\end{proof}

\subsection{Newton polygons}

We now introduce the Newton polygon, our tool for showing that a positive proportion of integral specializations $f_{\boldsymbol{t}_0}$ have certain cycle types in their Galois group. Let $p$ be a prime, $\mathbb{Q}_p$ the field of $p$-adic numbers, and $f(x) \in \mathbb{Q}_p[x]$ a polynomial.

\begin{definition}[Newton polygon]
\label{def:newton_polygon}
	With the notation above, let $f(x)$ be given by $f(x) = \sum_{i=0}^n k_ix^i$. The \textbf{Newton polygon of $f$} is the lower convex hull of the set
	\[\set{(i, v_p(k_i)) \in \mathbb{R}^2 \mid 0 \leq i \leq n},\]
	where $v_p$ denotes the $p$-adic valuation, and we set $v_p(0) = \infty$ by convention. We will denote the Newton polygon of $f$ by $\NP_{\mathbb{Q}_p}(f)$, and simply $\NP(f)$ when it will not create confusion.
\end{definition}

The Newton polygon $\NP(f)$ can be split up into segments of distinct increasing slopes. The number and slopes of segments in the Newton polygon determine the valuations of the roots of $f(x)$ in $\mathbb{Q}_p$. More precisely, if a segment of the Newton polygon with length $l$ has slope $s$ then $f(x)$ has $l$ roots each with valuation $-s$ in $\overline{\mathbb{Q}_p}$. For a proof, see \cite[II.6]{Neukirch}. This key fact allows us to prove two lemmas.

\begin{lemma}
\label{lem:NP_factorization}
	Suppose $\NP(f)$ has a segment of length $l$ and slope $s$, and no other segments of this slope (i.e. consider the \textit{entire} segment of slope $s$). Then $f$ has a factorization $f = f_0f_1$ over $\mathbb{Q}_p$, such that $\deg f_0 = l$ and the roots of $f_0$ have valuation $-s$.
	
	Moreover, if $s = r/l$ has reduced fraction form $r'/l'$ then all irreducible factors of $f_0$ over $\mathbb{Q}_p$ have degree divisible by $l'$. In particular, if $\gcd(r,l) = 1$ then the $f_0$ produced above is irreducible over $\mathbb{Q}_p$.
\end{lemma}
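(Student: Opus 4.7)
The plan is to start from the key fact stated just before the lemma: each segment of the Newton polygon of length $l$ and slope $s$ corresponds to exactly $l$ roots of $f$ in $\overline{\Q_p}$ having valuation $-s$. Since by hypothesis there is no other segment of slope $s$, these $l$ roots are precisely the complete set of roots of $f$ with valuation $-s$.

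From this I would build $f_0$ as the monic polynomial in $\overline{\Q_p}[x]$ whose roots are exactly this distinguished set, and take $f_1$ to be the product of the remaining linear factors (rescaled by the leading coefficient of $f$). To descend the factorization to $\Q_p$, I would invoke the uniqueness of the extension of the $p$-adic valuation to $\overline{\Q_p}$, which forces the absolute Galois group $\Gal(\overline{\Q_p}/\Q_p)$ to preserve valuations. Hence Galois permutes the set of roots of valuation $-s$, so the coefficients of $f_0$ lie in the fixed field $\Q_p$. Both $f_0$ and $f_1$ therefore live in $\Q_p[x]$, $\deg f_0 = l$, and all roots of $f_0$ have valuation $-s$ by construction.

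For the refinement, let $h \in \Q_p[x]$ be any irreducible factor of $f_0$ and pick a root $\alpha$ of $h$. Every root of $h$ has valuation $-s$, and $-s = -r'/l'$ in reduced form lies in the value group of $\Q_p(\alpha)$, which is $(1/e)\Z$ where $e$ denotes the ramification index of $\Q_p(\alpha)/\Q_p$. Since $\gcd(r', l') = 1$, divisibility $e \cdot (r'/l') \in \Z$ forces $l' \mid e$, and because $e$ divides $[\Q_p(\alpha) : \Q_p] = \deg h$, we conclude $l' \mid \deg h$. In the coprime case $\gcd(r,l) = 1$ we have $l' = l$, so every irreducible factor of $f_0$ has degree a multiple of $l = \deg f_0$, which forces $f_0$ to be irreducible.

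The one step that requires a little care is the Galois descent showing $f_0 \in \Q_p[x]$; everything else is essentially a bookkeeping consequence of the slope--valuation correspondence, so I do not anticipate a real obstacle beyond citing the uniqueness of the extended valuation.
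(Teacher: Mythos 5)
Your proof is correct, and its first half matches the paper's argument: both hinge on the fact that $\Gal(\overline{\Q_p}/\Q_p)$ preserves the (unique) extended valuation, so the roots of any irreducible factor over $\Q_p$ share a common valuation and the product of factors whose roots have valuation $-s$ descends to a degree-$l$ polynomial in $\Q_p[x]$. For the divisibility refinement you diverge mildly from the paper: you note that $-r'/l'$ lies in the value group $\frac{1}{e}\Z$ of $\Q_p(\alpha)$, forcing $l' \mid e \mid \deg h$, whereas the paper observes that the Newton polygon of an irreducible factor $h$ is a single segment of slope $s$ that can be written with denominator $\deg h$, whence $l' \mid \deg h$. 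These are the same constraint phrased through ramification theory versus the Newton polygon; both are brief and valid, so there is no gap, just a choice of which standard fact to cite.
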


\begin{proof}
	Since the action of the Galois group $\Gal(f/\mathbb{Q}_p)$ on the roots of $f$ preserves their valuations, we see that for an irreducible polynomial over $\mathbb{Q}_p$, all roots have the same valuation. Therefore, we can decompose $f$ into irreducible factors and group together those whose roots have valuation $-s$ into $f_0$. This must have degree $l$, since $f$ has exactly $l$ roots with valuation $-s$.
	
	For the second statement, we use the same observation above to recognize that the Newton polygon of an irreducible polynomial has one segment. Let $g$ be an irreducible polynomial over $\mathbb{Q}_p$ dividing $f_0$. Then $\NP(g)$ has one segment of slope $s = r_g / \deg g$. Since reducing this fraction also produces $r'/l'$, we must have $l' \mid \deg G$.
\end{proof}

\begin{lemma}
\label{lem:NP_cycle}
	Suppose $f(x) \in \mathbb{Q}[x]$, $p > \deg f$, and $\NP_{\mathbb{Q}_p}(f)$ has a segment of length $l$ and slope $r/l$ such that $\gcd(r,l) = 1$. If $f = f_0f_1$ is the factorization from Lemma \ref{lem:NP_factorization} and $l$ is pairwise coprime to the degrees of the irreducible factors of $f_1$ over $\mathbb{Q}_p$, then $\Gal(f/\mathbb{Q})$ contains an $l$-cycle.
\end{lemma}

\begin{proof}
		We begin by factoring $f = f_0f_1$ as in Lemma \ref{lem:NP_factorization}, noting that $f_0$ must be irreducible of degree $l$. Let $E_0, E_1, E = E_0E_1$ be the splitting fields of $f_0, f_1, f$ respectively, obtained by adjoining roots. Let $T_0, T_1, T$ denote the maximal unramified subextensions of $E_0, E_1, E$ over $\mathbb{Q}_p$. We will find an $l$-cycle in $\Gal(E_0/T_0)$ and make identifications
	\begin{equation}
	\label{eq:galois_identifications}
		\Gal(E_0/T_0) \simeq \Gal(E_0T_0T_1/T_0T_1) \simeq \Gal(E/E_1T_0T_1) \subseteq \Gal(E/\mathbb{Q}_p).
	\end{equation}
	Identifying $\Gal(E/\mathbb{Q}_p)$ with $\Gal(f/\mathbb{Q}_p)$ and taking the natural inclusion into $\Gal(f/\mathbb{Q})$ gives the result.
	
	Let $L = \mathbb{Q}_p[x] / (f_0(x))$, which is a degree $l$ extension of $\mathbb{Q}_p$. By hypothesis, the set of $p$-adic valuations $v_p(L^\times)$ contains $\frac{r}{l}$, and $\gcd(r,l) = 1$ implies that $\frac{1}{l}\mathbb{Z} \subseteq v_p(L^\times)$. Hence, the ramification index of $L/\mathbb{Q}_p$ is equal to $l$, making $L/\mathbb{Q}_p$ totally ramified. 
	
	The hypothesis $p > \deg f$ implies $p \nmid l$, meaning $L/\mathbb{Q}_p$ is totally tamely ramified, so there exists a uniformizer $\pi \in L$ which satisfies $x^l - p = 0$ \cite[II.5, Proposition 12]{lang}, and thus $L = \mathbb{Q}_p(\pi)$. Whether or not $L/\mathbb{Q}_p$ is Galois, the Galois closure $E_0$ has an cyclic automorphism of order $l$, coming from a primitive $l$-th root of unity. This automorphism fixes $T_0$ and necessarily acts nontrivially on $L$, permuting the roots of $f_0$ in a cyclic fashion.
	
	The first identification in \eqref{eq:galois_identifications} follows from elementary Galois theory, since $E_0 \cap T_0T_1 = T_0$ and $E_0/T$ is a Galois extension. See e.g.\ \cite[\S 14.4, Proposition 19]{dummitfoote}.
	
	For the second identification in \eqref{eq:galois_identifications}, we remark that the ramification index $[E_1 : T_1]$ divides the product of the degrees of the irreducible factors of $f_1$ over $\mathbb{Q}_p$, which is coprime to $l$ by our hypotheses. Moreover, we have $[E_1: T_1] = [E_1T_0T_1: T_0T_1]$ since $E_1 \cap T_0T_1 = T_1$. Thus $l = [E_0: T_0] = [E_0T_0T_1:T_0T_1]$ is coprime to $[E_1T_0T_1: T_0T_1]$, so $E_0T_0T_1 \cap E_1T_0T_1 = T_0T_1$, and the identification of Galois groups follows from the same argument as the first one.
\end{proof}

To connect this result to families of polynomials, suppose we have an irreducible $f(\boldsymbol{t},x) \in \mathbb{Q}(\boldsymbol{t})[x]$, which is equivalent to its Galois group $G$ being a transitive subgroup of $S_n$. We look for integral specializations $\boldsymbol{t}_0 \in \mathbb{Z}^k$ such that for some prime $p$, the Newton polygon of $f_{\boldsymbol{t}_0}$ has a segment satisfying the hypothesis of Lemma \ref{lem:NP_cycle}, and hence $G_{\boldsymbol{t}_0}$ contains an $l$-cycle. If we find that we need only specify the $p$-adic valuations of the specialization $\boldsymbol{t}_0$ to obtain such a cycle, then $G_{\boldsymbol{t}_0}$ contains an $l$-cycle for a positive proportion of $\boldsymbol{t}_0 \in \mathbb{Z}^k$, and Corollary \ref{cor:hilbert_cor} implies that $G$ contains an $l$-cycle as well. Repeating this procedure to find cycles of different lengths, we can hope to satisfy the hypotheses of Proposition \ref{prop:gen_sets} to see that $G$ is in fact the full symmetric group, in which case Theorem \ref{thm:hilbert_irred} implies 100\% of specializations have $G_{\boldsymbol{t}_0} = S_n$. We realize this procedure in the following section for specific polynomial families.

\section{Polynomial families arising from hyperelliptic curves}
\label{sec:galois_families}

Suppose $C/\mathbb{Q}$ is a nonsingular hyperelliptic curve given by
\[C \colon y^2 = f(x) = \sum_{i=0}^d c_ix^i\]
for a squarefree polynomial $f(x) \in \mathbb{Z}[x]$. In this section, we construct families of polynomials whose specializations give rise to number fields generated by algebraic points on $C$.

Let $g(x) = \sum_{i=0}^{d_g} a_ix^i \in \mathbb{Q}(\a)[x]$ and $h(x) = \sum_{i=0}^{d_h} b_ix^i \in \mathbb{Q}(\b)[x]$, where $\a = (a_0, \ldots a_{d_g})$ and $\b = (b_0, \ldots, b_{d_h})$. Then consider the polynomial in $\mathbb{Q}(\a,\b)[x]$ given by
\begin{equation}\label{eq:F_f}
F_f(\a,\b,x) = g(x)^2 - f(x)h(x)^2,
\end{equation}
which has degree $n = \max(2d_g, d + 2d_h)$. We will use $F_{f,\a_0, \b_0}(x)$ to denote a specialization with $\a_0 \in \mathbb{Q}^{d_g+1}$ and $\b_0 \in \mathbb{Q}^{d_h+1}$.

Let $\a_0, \b_0$ be rational specializations such that $F_{f,\a_0,\b_0}(x)$ has degree $n$ over $\mathbb{Q}$, and suppose further that it is irreducible. Take $\alpha$ to be a root of $F_{f,\a_0,\b_0}(x)$, which by rearranging \eqref{eq:F_f} satisfies
\[\left(\frac{g_{\a_0}(\alpha)}{h_{\b_0}(\alpha)}\right)^2 = f(\alpha).\]
Thus we have $P = \left(\alpha, \frac{g_{\a_0}(\alpha)}{h_{\b_0}(\alpha)}\right)$ is an algebraic point on $C$ and $\mathbb{Q}(P)$ is precisely the degree $n$ field $\mathbb{Q}(\alpha)$.

Given $f(x)$ and a degree $n$, our goal is now to describe a polynomial family $F_f(\a,\b,x)$, and use the methods of the previous section to prove that it is irreducible over $\mathbb{Q}(\a,\b)$ with Galois group $G \simeq S_n$. This will give us a means of producing many degree $n$ number fields which are generated by algebraic points of $C$, which we can count later.

\subsection{Odd degree curves} Fix $f$ with odd degree $d \geq 3$. Fix a degree $n \geq d$. We take the degrees $d_g$ and $d_h$ to be as large as possible so \eqref{eq:F_f} has degree $n$,
\begin{align*}
	d_g &= \begin{cases} (n-1)/2, & n \text{ odd,}\\ n/2, & n \text{ even,} \end{cases}\\
	d_h &= \begin{cases} (n-d)/2, & n \text{ odd,}\\ (n-d-1)/2, & n \text{ even.} \end{cases}\end{align*}
For simplicity, we denote the polynomial family \eqref{eq:F_f} by $F(x) \in \mathbb{Q}(\a,\b)[x]$ and a specialization by $F_{\a_0,\b_0}(x) \in \mathbb{Q}[x]$, leaving both $f$ and $n$ implicit when it will not create confusion.

\begin{proposition}
\label{prop:odd_irred_Sn}
	Fix a polynomial $f$ and integers $n, d_g, d_h$ as above. Then $F_f$ is irreducible in $\mathbb{Q}(\a,\b)[x]$ and $\Gal(F_f/\mathbb{Q}(\a,\b)) \simeq S_n$.
\end{proposition}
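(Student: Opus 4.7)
The plan is to deploy the Newton polygon machinery of Section~\ref{sec:group_galois}. By Corollary~\ref{cor:hilbert_cor}, to show that $G := \Gal(F_f/\Q(\a,\b))$ contains a cycle of a given type, it suffices to exhibit a positive proportion of integer specializations $(\a_0,\b_0)$ whose specialized Galois group $G_{\a_0,\b_0}$ contains such a cycle. Since the hypothesis of Lemma~\ref{lem:NP_cycle} is purely local, prescribing the $p$-adic valuations of the finitely many coordinates of $(\a_0,\b_0)$ picks out a positive proportion of $\Z^{d_g+1}\times\Z^{d_h+1}$, so Corollary~\ref{cor:hilbert_cor} is always available.

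I would first establish irreducibility of $F_f$ over $\Q(\a,\b)$, equivalently, the transitivity of $G$. Exhibit one integer specialization $(\a_0,\b_0)$ and a prime $p$ with $p\nmid n$ for which $\NP_{\Q_p}(F_{\a_0,\b_0})$ consists of a single segment of length $n$ and slope $r/n$ in lowest terms. Lemma~\ref{lem:NP_cycle} then forces $F_{\a_0,\b_0}$ to be irreducible over $\Q_p$, and hence over $\Q$. Since any nontrivial factorization of $F_f$ in $\Q(\a,\b)[x]$ would specialize to one of $F_{\a_0,\b_0}$, no such factorization exists.

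Next I would produce the further cycles required by Proposition~\ref{prop:gen_sets}. A transposition comes from a specialization whose Newton polygon at an odd prime has a segment of length $2$ and slope $r/2$ with $r$ odd, while an $(n-1)$-cycle comes from a segment of length $n-1$ and slope $r/(n-1)$ in lowest terms, at a prime coprime to $n-1$. If hitting coprimality for the $(n-1)$-slope is inconvenient for a given $n$, Bertrand's postulate supplies a prime $\ell$ with $n/2<\ell<n$, and a segment of length $\ell$ and slope $r/\ell$ yields the required long cycle in its place. Proposition~\ref{prop:gen_sets} then gives $G\simeq S_n$. As a fallback in the most inconvenient parity cases, Proposition~\ref{prop:gen_set_even} reaches the same conclusion from a transposition, a $3$-cycle, and an $(n-2)$-cycle produced by analogous Newton polygon specializations.

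The main obstacle will be the combinatorial design of these specializations. Each coefficient of $F_{\a_0,\b_0}=g(x)^2-f(x)h(x)^2$ is a bilinear form in the $a_i,b_j$ twisted by the coefficients of $f$, so hitting prescribed valuations at the vertices of the Newton polygon while staying on or above the desired segment at every intermediate coefficient demands careful calibration of $v_p(a_i)$ and $v_p(b_j)$. The delicate point is to rule out unwanted cancellations between the $g^2$ and $fh^2$ contributions that could depress an intermediate valuation below target. A guiding trick is to make the valuations $v_p(a_i)$ and $v_p(b_j)$ grow rapidly enough in $i$ and $j$ that in each coefficient of $F_{\a_0,\b_0}$ only a small number of bilinear terms dominate, so the Newton polygon is governed by a short list of extremal contributions; the residual freedom in the choice of $(\a_0,\b_0)$ is then more than enough to tune each slope into lowest terms. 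Once these specializations are in hand, Lemma~\ref{lem:NP_cycle} and Corollary~\ref{cor:hilbert_cor} feed the required cycles into the group-theoretic criteria, completing the proof.
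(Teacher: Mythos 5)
Your meta-strategy is exactly the paper's: use Newton polygons of carefully chosen $p$-adic specializations (Lemma~\ref{lem:NP_cycle}) plus Corollary~\ref{cor:hilbert_cor} to inject the cycle types demanded by Proposition~\ref{prop:gen_sets} or~\ref{prop:gen_set_even} into $G$. But there is a genuine gap in your irreducibility step when $n$ is even, and the combinatorics you defer are in fact the whole ballgame.

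\textbf{The $n$-even obstruction.} You propose to establish irreducibility by exhibiting a specialization whose Newton polygon is a single segment of length $n$ with slope $r/n$ in lowest terms. This works for $n$ odd: the paper's specialization~\eqref{eq:odd_d_odd_n_n_cycle} produces a segment of slope $2/n$, and $\gcd(2,n)=1$. For $n$ even, however, there is a structural obstacle. When $n$ is even one takes $d_g=n/2$ and $d_h=(n-d-1)/2$, so $\deg(f h^2)=n-1$ and the leading coefficient of $F_f$ is \emph{exactly} $a_{n/2}^2$, a perfect square. If the $f h^2$ contribution stays above the lower hull (which is what happens under the ``cheap'' valuation prescriptions, e.g.\ $v_p(b_j)\geq 2$ for all $j$), then $\NP(F)=\NP(g^2)$, and since $\NP(g^2)$ is $\NP(g)$ scaled by $2$ in both axes, every segment length is even --- so a single segment of length $n$ has slope with even numerator, never coprime to even $n$. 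Forcing the $f h^2$ term to change the hull requires cancellations controlled by \emph{residue classes}, not just valuations, and even then it can fail: in the test case $d=3$, $n=4$, the congruences needed to make $F\equiv a_2^2 x^4\pmod p$ force $a_0\equiv a_1\equiv b_0\equiv 0\pmod p$, which pushes the constant term's valuation to $\geq 2$ and leaves the slope numerator even. The paper sidesteps this with a two-step argument you do not anticipate: specialization~\eqref{eq:odd_d_even_n_split} gives a single segment of slope $-2/n$, which by Lemma~\ref{lem:NP_factorization} pins any $\Q_p$-factorization of $F$ down to two degree-$n/2$ pieces, and then a \emph{second} specialization~\eqref{eq:odd_d_even_n_n-1_cycle} with a length-$(n-1)$ segment produces a factorization incompatible with that shape, forcing irreducibility. (A small misattribution: irreducibility from a single coprime-slope segment is Lemma~\ref{lem:NP_factorization}, not Lemma~\ref{lem:NP_cycle}.)

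\textbf{The unresolved combinatorics.} You correctly flag that engineering the valuations of $F_{\a_0,\b_0}=g^2-fh^2$ is the crux, but ``the residual freedom $\ldots$ is then more than enough'' is not an argument. The paper earns this: for the transposition it must pass from valuations to explicit residue-class congruences~\eqref{eq:odd_d_even_n_transp} on $a_0,a_1,a_2,b_0,b_1,b_2$, exploiting that $c_0$ is a nonzero quadratic residue modulo a suitable $p$; for the $q$-cycle ($q$ prime, $(n-1)/2<q<n-1$, via Bertrand as you suggest) the specialization~\eqref{eq:odd_d_odd_n_q_cycle} must isolate the degree-$(n-q)$ coefficient; and the $n$-even specialization for the $(n-1)$-cycle must track the cross term $2a_{n/2-1}a_{n/2}-b_{(n-d-1)/2}^2 c_d$ to nail the vertex at $(n-1,2)$. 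None of this is a matter of making valuations ``grow rapidly'': it is precisely the bilinear cancellation between $g^2$ and $f h^2$ that must be exploited, not suppressed. Finally, the boundary case $n=d=3$ requires a separate argument (the paper translates to arrange $c_0\neq 0$ \emph{and} $c_1^2-4c_0c_2\neq 0$, then produces a transposition directly), and you don't address it.
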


\begin{proof}

The irreducibility and Galois group of $F_f(x)$ over $\mathbb{Q}(\a,\b)$ are invariant under a linear change of variables in $x$. It will be convenient to assume that the constant term of $f$, $c_0$, is nonzero, which is always possible after a linear change of variables. We treat the cases of $n$ even and odd separately.\bigskip

\textit{Case 1: $n$ is even.} When $n$ is even, we take $d_g = n/2$ and $d_h = (n-d-1)/2$. Let $p$ be a prime that does not divide any nonzero coefficient of $f$. Consider an integral specialization $\a_0 = (a_0, \ldots, a_{n/2})$ and $\b_0 = (b_0, \ldots, b_{(n-d-1)/2})$ with the following $p$-adic valuations:
\begin{align}
\label{eq:odd_d_even_n_split}
	v_{p}(a_0) &= 1 \\
	\nonumber v_{p}(a_i) & \geq 1 \text{ for } 0 < i < n/2\\
	\nonumber v_{p}(a_{n/2}) &= 0\\
	\nonumber v_{p}(b_j) &\geq 2 \text{ for } 0 \leq j \leq (n-d-1)/2.
\end{align}
These requirements on the valuations of $b_j$ allow us to effectively ignore the $h_{\b_0}(x)^2f(x)$ term of $F_{\a_0, \b_0}$ in constructing the Newton polygon. Inspecting the valuations of the coefficients of $g_{\a_0}(x)^2$ gives the resulting $\mathbb{Q}_{p}$-adic Newton polygon for $F_{\a_0, \b_0}$, shown in Figure \ref{fig:odd_d_even_n_split_NP}. 

\begin{figure}[ht]
\centering
\caption{$\NP_{\mathbb{Q}_{p}}(F_{\a_0,\b_0})$ with one segment of slope $-2/n$}
\label{fig:odd_d_even_n_split_NP}
\vspace{1ex}

\begin{tikzpicture}[scale=0.75]
	\draw[->, thick] (-0.2, 0) -- (10.5, 0);
	\draw[->, thick] (0, -0.2) -- (0, 2.5);
	
	\filldraw[black] (0, 2) circle (2pt) node[left] {$(0, 2)$};
	\filldraw[black] (5,1) circle (2pt) node[above] {$(n/2, 1)$};
	\filldraw[black] (10, 0) circle (2pt) node[above] {$(n, 0)$};
	
	\draw[black] (0,2) -- (10, 0);
	
\end{tikzpicture}
\end{figure}

The Newton polygon $\NP_{\mathbb{Q}_{p}}(F_{\a_0,\b_0})$ has one segment of slope $-2/n$, so by Lemma \ref{lem:NP_factorization}, if $F_{\a_0,\b_0}$ is reducible over $\mathbb{Q}_{p}$ then it is the product of two degree $n/2$ irreducible factors. In particular, if $F$ is reducible over $\mathbb{Q}(\a,\b)$, it must also be the product of two degree $n/2$ irreducible factors, as any other factorization would yield an incompatible factorization upon specializing by $\a_0, \b_0$ with the valuations given in \eqref{eq:odd_d_even_n_split}.

Let us now consider a different integral specialization $\a_0, \b_0$ with the following $p$-adic valuations:
\begin{align}
\label{eq:odd_d_even_n_n-1_cycle}
	v_p(a_0) &= 0 \\
	\nonumber v_p(a_i) & \geq 2 \text{ for } 0 < i \leq n/2\\
	\nonumber v_p(b_j) & \geq 2 \text{ for } 0 \leq j < (n-d-1)/2\\
	\nonumber v_p(b_{(n-d-1)/2}) & = 1.
\end{align}
The constant term of $F_{\a_0,\b_0}$ is $a_0^2 - b_0^2c_0$ which has valuation 0. All other coefficients can be seen to have valuation at least 2, with the leading coefficient having valuation at least 4. The coefficient of $x^{n-1}$ is given by $2a_{n/2-1}a_{n/2} - b_{(n-d-1)/2}^2c_d$, which has valuation exactly 2. The resulting Newton polygon is shown below in Figure \ref{fig:odd_d_even_n_n-1_cycle}. 

\begin{figure}[ht]
\centering
\caption{$\NP_{\mathbb{Q}_p}(F_{\a_0,\b_0})$ with $(n-1)$-cycle}
\label{fig:odd_d_even_n_n-1_cycle}
\vspace{1ex}

\begin{tikzpicture}[scale=0.75]
	\draw[->, thick] (-0.2, 0) -- (10.2, 0);
	\draw[->, thick] (0, -0.2) -- (0, 3.2);
	
	\filldraw[black] (0, 0) circle (2pt) node[left] {$(0, 0)$};
	\filldraw[black] (9.5, 1.5) circle (2pt) node[right] {$(n-1, 2)$};
	\filldraw[black] (10, 3) circle (2pt) node[right] {$(n, \geq 4)$};
	
	\draw[black] (0,0) -- (9.5, 1.5);
	\draw[black] (9.5, 1.5) -- (10, 3);
	
\end{tikzpicture}
\end{figure}

This Newton polygon has a segment of length $n-1$ and slope equal to $2/(n-1)$, so by Lemma \ref{lem:NP_factorization} whenever $\a_0, \b_0$ have the $p$-adic valuations given in \eqref{eq:odd_d_even_n_n-1_cycle}, we have that $F_{\a_0,\b_0}$ factors as a degree $n-1$ irreducible polynomial times a linear polynomial over $\mathbb{Q}_p$. Such a factorization cannot occur if $F$ has two irreducible degree $n/2$ factors over $\mathbb{Q}(\a,\b)$, so we may conclude that $F$ is irreducible, and hence $G$ is a transitive permutation subgroup of $S_n$. Moreover, Lemma \ref{lem:NP_cycle} implies that the Galois group of $F_{\a_0,\b_0}$ over $\mathbb{Q}$ contains a cycle of length $n-1$ whenever $\a_0$ and $\b_0$ satisfy the valuations in \eqref{eq:odd_d_even_n_n-1_cycle}. These valuation criteria are satisfied for a positive proportion of integral specializations $\a_0$ and $\b_0$, so Corollary \ref{cor:hilbert_cor} implies that $G$ contains an ($n-1$)-cycle.

To produce a transposition in $G$, we first argue that the set of primes $p$ such that $f(k) \equiv 0 \pmod{p}$ for some $k \in \mathbb{Z}$, is infinite. This fact has an elementary proof, but is also seen to be a consequence of the Chebotarev density theorem (see e.g.\ \cite[\S 13]{Neukirch}), as this set contains the positive density set of primes which split completely in the splitting field of $f(x)$. In any case, we may fix a prime $p > n$ with $p \nmid \Disc f, c_d$ such that $p \mid f(k)$ for some integer $k$, which implies that $p$ divides the constant coefficient of the translation $f(x+k)$, but not the linear term, as $p \nmid \Disc f$ implies that the reduction of $f(x)$ mod $p$ is also squarefree. Using a Hensel's lemma lifting argument, we can further find an integral solution to $f(x) \equiv 0 \pmod{p}$ such that $f(x) \not\equiv 0 \pmod{p^2}$. Thus after possibly another change of variables, we may assume that $v_p(c_0) = 1$ and $v_p(c_1) = 0$.

We consider an integral specialization $\a_0, \b_0$ with the following $p$-adic valuations:
\begin{align}
\label{eq:odd_d_even_n_transp}
	v_p(a_0) &= 2 \\
\nonumber	v_p(a_1) &= 0 \\
\nonumber	v_p(a_i) &\geq 2 \text{ for } 1 < i < n/2\\
\nonumber v_p(a_{n/2}) &= 3\\
\nonumber	v_p(b_0) = v_p(b_{(n-d-1)/2}) &= 1 \\
\nonumber	v_p(b_j) &\geq 1 \text{ for } 0 < j < (n-d-1)/2.
\end{align}
These requirements ensure that the constant term of $F_{\a_0, \b_0}$ has valuation exactly 3, the coefficient of $x^2$ has valuation exactly 0, the $x^{n-1}$ coefficient $2a_{n/2}a_{n/2-1} - b_{(n-d-1)/2}^2c_d$ has valuation exactly 2, and the leading term has valuation exactly 6, with all other coefficients having valuation at least 2. The resulting Newton polygon is shown below in Figure \ref{fig:odd_d_even_n_transp}.

\begin{figure}[ht]
\centering
\caption{$\NP_{\mathbb{Q}_p}(F_{\a_0,\b_0})$ with transposition}
\label{fig:odd_d_even_n_transp}
\vspace{1ex}

\begin{tikzpicture}[scale=0.75]
	\draw[->, thick] (-0.2, 0) -- (10.5, 0);
	\draw[->, thick] (0, -0.2) -- (0, 3.5);
	
	\filldraw[black] (0, 1.5) circle (2pt) node[left] {$(0, 3)$};
	\filldraw[black] (2, 0) circle (2pt) node[below] {$(2, 0)$};
	\filldraw[black] (9, 1) circle (2pt) node[right] {$(n-1, 2)$};
	\filldraw[black] (10, 3) circle (2pt) node[left] {$(n, 6)$};
	
	\draw[black] (0,1.5) -- (2,0);
	\draw[black] (2,0) -- (9,1);
	\draw[black] (10,3) -- (9,1);
\end{tikzpicture}
\end{figure}

That $n > d \geq 3$ ensures that $\frac{2}{n-3} < 4$, so the two rightmost segments are distinct. These, together with the segment of length 2 and slope $-3/2$ above, ensure that $F_{\a_0, \b_0}$ has factors of degree 2, $n-3$, and 1 over $\mathbb{Q}_p$, so Lemma \ref{lem:NP_cycle} applies to reveal a transposition in $G_{\a_0, \b_0}$.

Since a positive proportion of integer tuples $\a_0, \b_0$ satisfy \eqref{eq:odd_d_even_n_transp}, Corollary \ref{cor:hilbert_cor} implies that $G$ also contains a transposition. Thus $G$ satisfies the hypotheses of Proposition \ref{prop:gen_sets} and we conclude that $G \simeq S_n$. \bigskip

\textit{Case 2: $n$ is odd.} Now we take $d_g = (n-1)/2$ and $d_h = (n-d)/2$. Fix a prime $p$ not dividing any nonzero coefficient of $f$. Consider an integral specialization $\a_0 = (a_0, \ldots, a_{(n-1)/2})$ and $\b_0 = (b_0, \ldots b_{(n-d)/2})$ with the following $p$-adic valuations:
\begin{align}\label{eq:odd_d_odd_n_n_cycle}
	v_p(a_0) &= 0\\
	\nonumber v_p(a_i) & \geq 2 \text{ for } i > 0\\
	\nonumber v_p(b_j) & \geq 2 \text{ for } j < (n-d)/2\\
	\nonumber v_p(b_{(n-d)/2}) & =1.
\end{align}
These requirements ensure that the constant term $a_0^2 - b_0^2c_0$ has valuation exactly 0, the leading coefficient $b_{(n-d)/2}^2c_d$ has valuation exactly 2, and all intermediate coefficients have valuation at least 2. This produces the $p$-adic Newton polygon for $F_{\a_0, \b_0}$ shown below in Figure \ref{fig:odd_d_odd_n_n_cycle}.

\begin{figure}[ht]
\centering
\caption{$\NP_{\mathbb{Q}_p} (F_{\a_0, \b_0})$ with $n$-cycle}
\label{fig:odd_d_odd_n_n_cycle}
\vspace{1ex}

\begin{tikzpicture}[scale=0.75]
	\draw[->, thick] (-0.2, 0) -- (10.2, 0);
	\draw[->, thick] (0, -0.2) -- (0, 2.2);
	
	\filldraw[black] (0, 0) circle (2pt) node[left] {$(0, 0)$};
	\filldraw[black] (10, 2) circle (2pt) node[right] {$(n, 2)$};
	
	\draw[black] (0,0) -- (10,2);
\end{tikzpicture}

\end{figure}

This Newton polygon has one segment of slope $2/n$, and since $n$ is odd we have $\gcd(2,n) = 1$. Thus Lemma \ref{lem:NP_factorization} implies that the specialization $F_{\a_0, \b_0}$ is irreducible over $\mathbb{Q}_p$, hence over $\mathbb{Q}$, and we have that $F$ must be irreducible over $\mathbb{Q}(\a, \b)$, with its Galois group $G$ a transitive subgroup of $S_n$.

Next, we aim to produce a $q$-cycle in $G$ for a prime $q > n/2$. We will assume $n > 3$ for now, as the case of $n = d = 3$ will be handled by later arguments. Recalling Bertrand's postulate, there exists some prime $q$ such that $\frac{n-1}{2} < q < n-1$, which is odd and satisfies $q > n/2$. Consider now a specialization $\a_0, \b_0$ satisfying
\begin{align}
\label{eq:odd_d_odd_n_q_cycle}
	v_p(a_{(n-q)/2}) &= 0 \\
	\nonumber v_p(a_i) & \geq 2 \text{ for } i \neq (n-q)/2\\
	\nonumber v_p(b_j) & \geq 2 \text{ for } j < (n-d)/2\\
	\nonumber v_p(b_{(n-d)/2}) &= 1.
\end{align}
These requirements ensure that the valuations of all coefficients of $F_{\a_0, \b_0}$ are at least 2, except for the degree $n-q$ term, whose coefficient has valuation zero coming from the presence of an $a_{(n-q)/2}^2$ term. The leading coefficient $b_{(n-d)/2}^2c_d$ has valuation exactly 2. An example  $p$-adic Newton polygon for such a specialization $F_{\a_0, \b_0}$ is shown below in Figure \ref{fig:odd_d_odd_n_q_cycle}.

\begin{figure}[ht]
\centering
\caption{$\NP_{\mathbb{Q}_p}(F_{\a_0,\b_0})$ with $q$-cycle}
\label{fig:odd_d_odd_n_q_cycle}
\vspace{1ex}

\begin{tikzpicture}[scale=0.75]
	\draw[->, thick] (-0.2, 0) -- (10.2, 0);
	\draw[->, thick] (0, -0.2) -- (0, 3.5);
	
	\filldraw[black] (0, 3) circle (2pt) node[left] {$(0, \geq 4)$};
	\filldraw[black] (3,0) circle (2pt) node[below] {$(n-q, 0)$};
	\filldraw[black] (10, 1.5) circle (2pt) node[above] {$(n, 2)$};
	
	\draw[black] (0,3) -- (3, 0);
	\draw[black] (3, 0) -- (10, 1.5);
\end{tikzpicture}

\end{figure}

Note that the left side of the Newton polygon in Figure \ref{fig:odd_d_odd_n_q_cycle} need not be a single segment, or if $n=q$ it will not exist at all. This is inconsequential however, because the right side is of interest to us, in particular the segment of slope $2/q$ and length $q$. Since $q > n/2$ is an odd prime, we have $\gcd(2,q) = 1$ and $q$ is coprime to any integers less than or equal to $n-q$, so Lemma \ref{lem:NP_cycle} applies, ensuring the existence of a $q$-cycle in $G_{\a_0, \b_0}$. Since a positive proportion of integral specializations satisfy \eqref{eq:odd_d_odd_n_q_cycle}, Corollary \ref{cor:hilbert_cor} implies that $G$ contains a $q$-cycle as well.

	Finally, we can produce a transposition in $G$ using essentially the same argument as in the case of even $n$. After a possible change of variables, let $p>n$ be a prime such that $v_p(c_0) = 1$ and $p \nmid \Disc f, c_d$. We consider specializations with the following $p$-adic valuations.
\begin{align}
\label{eq:odd_d_odd_n_transp}
	v_p(a_0) &= 2\\
\nonumber	v_p(a_1) &= 0\\
\nonumber	v_p(a_i) &\geq 2 \text{ for } 1 < i \leq (n-1)/2\\
\nonumber	v_p(b_0) = v_p(b_{(n-d)/2}) &= 1\\
\nonumber	v_p(b_j) &\geq 1 \text{ for } 0 < j < (n-d)/2.
\end{align}
These conditions produce the Newton polygon shown below in Figure \ref{fig:odd_d_odd_n_transp}. 

\begin{figure}[ht]
\centering
\caption{$\NP_{\mathbb{Q}_p}(F_{\a_0,\b_0})$ with transposition}
\label{fig:odd_d_odd_n_transp}
\vspace{1ex}

\begin{tikzpicture}[scale=0.75]
\draw[->, thick] (-0.2, 0) -- (10.5, 0);
	\draw[->, thick] (0, -0.2) -- (0, 3.5);
	
	\filldraw[black] (0, 3) circle (2pt) node[left] {$(0, 3)$};
	\filldraw[black] (2, 0) circle (2pt) node[below] {$(2, 0)$};
	\filldraw[black] (9, 2) circle (2pt) node[right] {$(n, 2)$};
	
	\draw[black] (0,3) -- (2,0);
	\draw[black] (2,0) -- (9,2);
\end{tikzpicture}
\end{figure}

Since $n-2$ is odd, Lemma \ref{lem:NP_cycle} applied to the segment of slope $-3/2$ implies that $G_{\a_0, \b_0}$, and hence $G$ by Corollary \ref{cor:hilbert_cor}, contains a transposition. Therefore, $G$ satisfies the hypotheses of Proposition \ref{prop:gen_sets}, and we conclude $G \simeq S_n$.
\end{proof}

\subsection{Even degree curves} We now present the analogous proposition for the case of $d$ even. Let $f(x) \in \mathbb{Z}[x]$ be squarefree given by $f(x) = \sum_{i=0}^d c_ix^i$, with $d \geq 4$ even. Fix an even integer $n \geq d+2$ and take $d_g = n/2$ and $d_h = (n-d)/2-1$. Let $F_f(\a,\b,x) \in \mathbb{Q}(\a,\b)[x]$ denote the polynomial family in \eqref{eq:F_f}, which is seen to have degree $n$. Again, for simplicity we denote this by $F(x)$ when it will not create confusion.

\begin{proposition}
\label{prop:even_irred}
	Fix a polynomial $f$, an even integer $n$, and degrees $d_g, d_h$ as above. Then $F_f$ is irreducible in $\mathbb{Q}(\a,\b)[x]$ and $\Gal(F_f/\mathbb{Q}(\a,\b)) \simeq S_n$.
\end{proposition}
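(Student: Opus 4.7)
The plan is to follow the template of Proposition \ref{prop:odd_irred_Sn}: construct integer specializations of $(\a, \b)$ whose $p$-adic Newton polygons, via Lemmas \ref{lem:NP_factorization} and \ref{lem:NP_cycle}, restrict the factorization pattern of $F$ and supply cycle types in $G_{\a_0, \b_0}$ that lift to $G = \Gal(F_f/\Q(\a,\b))$ through Corollary \ref{cor:hilbert_cor}. Since both $d$ and $n$ are even, the relevant recognition result is Proposition \ref{prop:gen_set_even}, so the goal is to exhibit a transposition, a 3-cycle, and an $(n-2)$-cycle in $G$.

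For irreducibility, after a translation of $x$ arranging $c_0 \neq 0$, the specialization \eqref{eq:odd_d_even_n_split} from the $n$-even case of Proposition \ref{prop:odd_irred_Sn} still produces a Newton polygon consisting of the single segment of slope $-2/n$ in the even-$d$ setting: the underlying computation is insensitive to the parity of $d$, since the high-valuation $b_j$'s make $h^2 f$ negligible. By Lemma \ref{lem:NP_factorization} this forces any factorization of $F$ over $\Q(\a, \b)$ to consist of factors whose degrees are divisible by $n/2$, so either $F$ is irreducible or $F = F_1 F_2$ with both $\deg F_i = n/2$. A second specialization whose Newton polygon contains an irreducible factor of degree $n - 2$ (produced as part of the $(n-2)$-cycle construction below) rules out this degenerate splitting, since for $n \geq 6$ no subset of the resulting segment lengths $\{n-2, 1, 1\}$ or $\{n-2, 2\}$ sums to $n/2$.

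The transposition and 3-cycle are obtained exactly as in the $n$-even subcase of Proposition \ref{prop:odd_irred_Sn}. The setup \eqref{eq:odd_d_even_n_transp} depends only on the low-degree coefficients of $F$ and is insensitive to the parity of $d$, so it produces a length-2 Newton polygon segment of slope $-1/2$ and hence a transposition in $G$. A 3-cycle follows from Remark \ref{rem:short_cycles} with $k = 3$, valid since $n \geq d + 2 \geq 6$ gives $n \geq 2k$.

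The main obstacle is producing the $(n-2)$-cycle. The difficulty is structural: because the leading coefficient of $F$ is always the perfect square $a_{n/2}^2$, the rightmost vertex of any Newton polygon has even second coordinate, which obstructs a direct analogue of the $(n-1)$-cycle construction in \eqref{eq:odd_d_even_n_n-1_cycle}. The plan is to target a Newton polygon with penultimate vertex at $(n-2, r)$ for some odd $r$ coprime to $n-2$ (e.g.\ $r = 1$), followed by a short length-2 final segment of distinct slope up to $(n, 2v_p(a_{n/2}))$. Since $n - 2$ is even, producing an odd valuation in the coefficient of $x^{n-2}$ forces cancellation between the two dominant square contributions $a_{n/2-1}^2$ and $b_{d_h}^2 c_d$; this is achieved by choosing a prime $p$ for which $c_d$ is a nonzero quadratic residue modulo $p$ and prescribing residue classes of $a_{n/2-1}$ and $b_{d_h}$ modulo $p^2$, in the style of \eqref{eq:odd_d_even_n_transp}. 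Compatible valuation and residue conditions on the remaining parameters (ensuring both that the intermediate coefficients lie on or above the desired segment and that the coefficient of $x^{n-2}$ does not overcancel) restrict the specialization to a positive-density subset of integer tuples. Lemma \ref{lem:NP_cycle} then supplies the $(n-2)$-cycle, which lifts to $G$ via Corollary \ref{cor:hilbert_cor}, and Proposition \ref{prop:gen_set_even} yields $G \simeq S_n$.
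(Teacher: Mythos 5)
Your transposition and $3$-cycle constructions match the paper, and your irreducibility reduction (slope $-2/n$ segment forces factors of degree divisible by $n/2$, then a degree-$(n-2)$ irreducible factor from the second specialization rules out the $n/2$-$n/2$ split) is sound in principle. The gap is in the $(n-2)$-cycle construction, which you correctly identify as the crux.

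Your plan is to put the vertex at $(n-2, r)$ with $r$ odd, forcing cancellation between $a_{n/2-1}^2$ and $b_{d_h}^2 c_d$ by making $c_d$ a quadratic residue mod $p$. This requires $v_p(a_{n/2-1}) = v_p(b_{d_h}) = 0$. But then the coefficient of $x^{n-3}$ picks up a term $b_{d_h}^2 c_{d-1}$, whose valuation equals $v_p(c_{d-1})$ and is not under your control; if $p \nmid c_{d-1}$ this term has valuation $0$, which sinks the Newton polygon below the segment you want. Absorbing it would force a residue condition on $a_{n/2-2}$ making $v_p(a_{n/2-2}) = 0$, which then pollutes the $x^{n-2}$ coefficient (the cross term $2a_{n/2}a_{n/2-2}$) and the $x^{n-4}$ coefficient (via $a_{n/2-2}^2$), and the cascade continues without an evident termination. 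You have not verified that a compatible system of residue conditions exists, and there is good reason to doubt it does without further ideas.

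The paper sidesteps this entirely with a different change of variables, \emph{before} any Newton polygon analysis: it finds a prime $p \nmid \Disc f$ with $f(k) \equiv 0 \pmod{p}$, $\not\equiv 0 \pmod{p^2}$ for some integer $k$ (Euclid plus Hensel), translates so $v_p(c_0) = 1$, then replaces $x$ by $px$ so that $p \mid c_i$ for all $i \geq 1$. With this normalization every contribution of $f h^2$ to every coefficient automatically has valuation $\geq 1$ with no conditions on the $b_j$'s at all, and the specializations $v_p(a_{(n-2)/2}) = 0$, $v_p(a_i) \geq 1$ for $i \neq (n-2)/2$, $v_p(b_0) = 0$ immediately give the Newton polygon $(0,1) \to (n-2, 0) \to (n, \geq 2)$: the constant term has valuation exactly $1$ from $b_0^2 c_0$, the degree-$(n-2)$ coefficient has valuation $0$ from $a_{(n-2)/2}^2$, and everything else is $\geq 1$. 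This also gives a one-shot irreducibility argument: the same setup with $v_p(a_{n/2}) = 0$ produces a single segment of slope $-1/n$ with $\gcd(1,n) = 1$, so $F_{\a_0,\b_0}$ is already irreducible over $\Q_p$, making your two-step reduction unnecessary. The change-of-variables normalization is the missing idea in your proposal; without it, your cancellation scheme does not close.
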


\begin{proof}
 
	We will again need that the irreducibility of $F_f$ and its Galois group $G$ are invariant under linear change of coordinates in $x$, to allow us to assume certain conditions on the valuations of the $c_i$. 
	
		As in the proof of Proposition \ref{prop:odd_irred_Sn}, there exists a prime $p>n$ not dividing both $\Disc f, c_d$ such that $p$ divides $f(k)$ exactly once for some integer $k$. Thus after changing variables, we assume that $v_p(c_0) = 1$.
	
	Consider now the change of variables by scaling $x$ to be $px$. The constant term $c_0$ remains unchanged, but this allows us to assume that $p \mid c_i$ for $i \geq 1$. These assumptions are useful for finding long cycles in $G = \Gal(F_f/\mathbb{Q}(\a,\b))$. We consider an integral specialization $\a_0, \b_0$ with the following $p$-adic valuations:
	\begin{align}\label{eq:even_d_n_cycle}
		v_p(a_i) &\geq 1 \text{ for } i < n/2\\
		\nonumber v_p(a_{n/2}) &= 0 \\
		\nonumber v_p(b_0) &= 0,
	\end{align}
	and no restrictions on $b_j$ for $j > 0$. These restrictions, and assumptions on the coefficients $c_i$, ensure that every term of $F(x)$ is divisible by $p$, except for the leading coefficient $a_{n/2}^2$, which has valuation 0. Moreover, the valuation of the constant term $a_0^2 - b_0^2c_0$ is exactly 1, so the Newton polygon of $F_{\a_0, \b_0}$ has exactly one segment of length $n$ and slope $-1/n$, as shown in Figure \ref{fig:even_d_n_cycle}. 

\begin{figure}[ht]
\centering
\caption{$\NP_{\mathbb{Q}_p} (F_{\a_0, \b_0})$ with $n$-cycle}
\label{fig:even_d_n_cycle}
\vspace{1ex}

\begin{tikzpicture}[scale=0.75]
	\draw[->, thick] (-0.2, 0) -- (10.5, 0);
	\draw[->, thick] (0, -0.2) -- (0, 1.5);
	
	\filldraw[black] (0, 1) circle (2pt) node[left] {$(0, 1)$};
	\filldraw[black] (10, 0) circle (2pt) node[above] {$(n, 0)$};
	
	\draw[black] (0,1) -- (10,0);
\end{tikzpicture}
\end{figure}	
	
	Lemma \ref{lem:NP_cycle} implies that $F_{\a_0, \b_0}$ is irreducible over $\mathbb{Q}_p$, and hence over $\mathbb{Q}$, so $F$ is irreducible over $\mathbb{Q}(\a,\b)$ and $G$ is transitive, containing an $n$-cycle by Corollary \ref{cor:hilbert_cor}.
	
	We use a variation of this argument to find an $(n-1)$-cycle in $G$. Fix another prime $p>n$ such that after a change of variables we have $v_p(c_0) = 1$ and $p \nmid c_1$. We consider an integral specialization $\a_0, \b_0$ with the following $p$-adic valuations:
	\begin{align}\label{eq:even_d_n-1_cycle}
		v_p(a_i) &\geq 3 \text{ for } i < n/2\\
		\nonumber v_p(a_{n/2}) &= 0 \\
		\nonumber v_p(b_0) &= 1\\
		\nonumber v_p(b_j) & \geq 2 \text{ for } j > 0.
	\end{align}
	These restrictions ensure that the constant term has valuation 3, while the linear coefficient, $2a_0a_1 - b_0^2c_1 - 2b_0b_1c_0$, has valuation exactly 2. All other terms have valuation at least 2 except for the leading term, which has valuation 0. This produces the Newton polygon below in Figure \ref{fig:even_d_n-1_cycle}.
	
	\begin{figure}[ht]
\centering
\caption{$\NP_{\mathbb{Q}_p} (F_{\a_0, \b_0})$ with $(n-1)$-cycle}
\label{fig:even_d_n-1_cycle}
\vspace{1ex}

\begin{tikzpicture}[scale=0.75]
	\draw[->, thick] (-0.2, 0) -- (10.5, 0);
	\draw[->, thick] (0, -0.2) -- (0, 3.5);
	
	\filldraw[black] (0, 3) circle (2pt) node[left] {$(0, 3)$};
	\filldraw[black] (1, 2) circle (2pt) node[below] {$(1, 2)$};
	\filldraw[black] (10, 0) circle (2pt) node[above] {$(n, 0)$};
	
	\draw[black] (0,3) -- (1,2);
	\draw[black] (1,2) -- (10,0);
\end{tikzpicture}
\end{figure}	

	Since $n \geq 4$, the two segments are distinct, with the rightmost one of length $n-1$ and slope $-2/(n-1)$. As $n$ is even, Lemma \ref{lem:NP_cycle} is satisfied, producing an $(n-1)$-cycle in $G_{\a_0, \b_0}$ and thus in $G$.
	
	Finally, we produce a transposition in $G$, assuming that $n \geq 8$ for simplicity; nearly identical arguments suffice for the case of $d=4$ and $n=6$. More care is needed here to find a Newton polygon with exactly one segment of even length to satisfy the hypotheses of Lemma \ref{lem:NP_cycle}.
	
	Fix a prime $p > n$ such that $p \nmid c_d, \Disc f$, $c_d$ is a quadratic residue modulo $p$, and $p \mid f(k)$ for some integer $k$. Such a prime exists by our earlier Chebotarev argument, this time looking for primes splitting completely in the splitting field of $f(x)(x^2 - c_d)$.	After a change of coordinates, we assume $v_p(c_0) = 1$ and $p \nmid c_1$. We consider an integral specialization $\a_0, \b_0$ with the following restrictions:
	\begin{align}\label{eq:even_d_transp}
		v_p(a_i) &\geq 4 \text{ for } i < \frac{n}{2}-2\\
	\nonumber v_p(a_{n/2-2}) &= 0\\
	\nonumber v_p(a_{n/2-1}) &= 1\\
	\nonumber v_p(a_{n/2}) &= 1 \text{ such that } \frac{a_{n/2}^2}{p^2} \equiv c_d \pmod{p^2}\\
	\nonumber v_p(b_0) &= 1\\
	\nonumber v_p(b_j) &\geq 1\\
	\nonumber v_p(b_{(n-d)/2}) &= 1 \text{ such that } \frac{b_{(n-d)/2}^2}{p^2} \equiv 1 \pmod{p^2}.
	\end{align}
	Note that such $a_{n/2}$ exists, since $c_d$ is a quadratic residue, and these assumptions ensure that $p^4 \mid a_{n/2}^2 - b_{(n-d)/2}^2c_d$, the leading coefficient. Furthermore, we have that the constant coefficient has valuation 3, the linear coefficient has valuation 2, the $x^{n-4}$ coefficient has valuation 0, and both the $x^{n-3}$ and  $x^{n-2}$ coefficients have valuation 1, with all other terms having valuation at least 2. 
	
	Looking more closely at the coefficient of $x^{n-1}$ given by 
	\[2a_{n/2-1}a_{n/2} - b_{(n-d)/2-1}b_{(n-d)/2}c_d - b_{(n-d)/2}^2c_{d-1},\]
	we see that its valuation at least 2. To ensure it has valuation exactly 2, we fix a residue class for $\frac1pb_{(n-d)/2-1}$ modulo $p$ and ask that $a_{n/2-1}$ satisfy
	\begin{equation}\label{eq:condition_on_a_n/2-1}
		\frac{a_{n/2-1}}{p} \not\equiv \left(2\frac{a_{n/2}}{p}\right)^{-1}\frac{1}{p^2}\left(b_{(n-d)/2-1}b_{(n-d)/2}c_d - b_{(n-d)/2}^2c_{d-1}\right) \pmod{p}.
	\end{equation}
	Thus combining \eqref{eq:even_d_transp} and \eqref{eq:condition_on_a_n/2-1}, we produce the Newton polygon in Figure \ref{fig:even_d_transp} below.	
	
\begin{figure}[ht]
\centering
\caption{$\NP_{\mathbb{Q}_p} (F_{\a_0, \b_0})$ with transposition}
\label{fig:even_d_transp}
\vspace{1ex}	
	\begin{tikzpicture}[scale=0.75]
	\draw[->, thick] (-0.2, 0) -- (10.5, 0);
	\draw[->, thick] (0, -0.2) -- (0, 4.5);
	
	\filldraw[black] (0, 3) circle (2pt) node[left] {$(0, 3)$};
	\filldraw[black] (1, 2) circle (2pt) node[below] {$(1, 2)$};
	\filldraw[black] (6, 0) circle (2pt) node[below] {$(n-4,0)$};
	\filldraw[black] (8,1) circle (2pt) node[right] {$(n-2,1)$};
	\filldraw[black] (9,2) circle (2pt) node[right] {$(n-1,2)$};
	\filldraw[black] (10, 4) circle (2pt) node[above] {$(n, \geq 4)$};
	
	\draw[black] (0,3) -- (1,2);
	\draw[black] (6,0) -- (1,2);
	\draw[black] (6,0) -- (8,1);
	\draw[black] (9,2) -- (8,1);
	\draw[black] (9,2) -- (10,4);
\end{tikzpicture}
\end{figure}

	The segment of length 2 and slope 1/2, together with the fact that all other segments have odd length $l'$ and slopes $r'/l'$ with $\gcd(r',l') = 1$, allow us to apply Lemma \ref{lem:NP_cycle} with $l=2$ to produce a transposition in $G_{\a_0, \b_0}$. The requirements \eqref{eq:even_d_transp} and \eqref{eq:condition_on_a_n/2-1} are satisfied for a positive proportion of integral $\a_0, \b_0$, so Corollary \ref{cor:hilbert_cor} implies that $G$ contains a transposition. Thus with its $n$-cycle, $(n-1)$-cycle, and transposition, Proposition \ref{prop:gen_sets} gives that $G \simeq S_n$.	
	\end{proof}

\section{Relating coefficients to roots}
\label{sec:intermediates}

In this brief section we state a result which relates the absolute value of a polynomial's coefficients to that of its roots, which will be useful later when counting multiplicities of fields generated by a family of polynomials. To avoid confusion, we note that for the purposes of this section $f(x)$ denotes a general polynomial in $\mathbb{C}[x]$, rather than squarefree integral polynomial defining a nonsingular hyperelliptic curve, as in the previous section.

\begin{lemma}
\label{lem:roots_coeff_bound}
	Let $f(x) = \sum_{i=0}^n c_ix^{i} \in \mathbb{C}[x]$ be monic and have degree $n$. There exist positive constants $k_i$ such that for any $Y > 0$, if $|c_i| \leq k_iY^{n-i}$ for $0 \leq i \leq n$ then $|\alpha| \leq Y$ for all roots $\alpha$ of $f(x)$.
\end{lemma}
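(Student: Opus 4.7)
The plan is to argue by contradiction, exploiting monicity and the triangle inequality. Since $f$ is monic we already have $c_n = 1$, so the inequality $|c_n| \leq k_n Y^0$ forces $k_n \geq 1$; we may simply take $k_n = 1$. The remaining $k_i$ are the constants that need to be chosen.

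Suppose for contradiction that $\alpha$ is a root with $|\alpha| > Y$. From $f(\alpha) = 0$ and monicity, we rearrange to obtain
\[
\alpha^n = -\sum_{i=0}^{n-1} c_i \alpha^i,
\]
and apply the triangle inequality together with the hypothesis $|c_i| \leq k_i Y^{n-i}$ to get
\[
|\alpha|^n \;\leq\; \sum_{i=0}^{n-1} k_i\, Y^{n-i} |\alpha|^i.
\]
The key observation is that $|\alpha| > Y$ implies $Y^{n-i} < |\alpha|^{n-i}$ for each $0 \leq i \leq n-1$, hence $Y^{n-i}|\alpha|^i < |\alpha|^n$ in every term on the right.

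It therefore suffices to choose the constants so that $\sum_{i=0}^{n-1} k_i \leq 1$; for instance $k_i = 1/n$ for $0 \leq i < n$. Substituting this in yields
\[
|\alpha|^n \;<\; \sum_{i=0}^{n-1} \frac{1}{n} |\alpha|^n \;=\; |\alpha|^n,
\]
a contradiction. Thus no root satisfies $|\alpha| > Y$, and the lemma follows with $k_n = 1$ and $k_i = 1/n$ for $i < n$.

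There is no real obstacle here; the only thing to be careful about is that the estimate $Y^{n-i} |\alpha|^i < |\alpha|^n$ requires $n - i \geq 1$, which is exactly why the monic leading term $c_n$ must be handled separately rather than absorbed into the sum. Any choice of $k_i$ with $\sum_{i < n} k_i \leq 1$ works, so one has considerable flexibility should the application in Section \ref{sec:odd} call for different weights across the coefficients.
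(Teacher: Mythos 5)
Your proof is correct. It takes a genuinely different route from the paper: the paper invokes Fujiwara's explicit root bound
\[
|\alpha| \leq 2\max\left\lbrace \left|\frac{c_{n-1}}{c_n}\right|, \left|\frac{c_{n-2}}{c_n}\right|^{1/2}, \ldots, \left|\frac{c_0}{2c_n}\right|^{1/n}\right\rbrace
\]
as a black box and then sets $k_i = 1/2^{n-i}$ (with $k_0 = 1/2^{n-1}$) so that each term inside the maximum is at most $Y/2$. You instead give a self-contained contradiction argument from the triangle inequality: assuming a root $\alpha$ with $|\alpha| > Y$, you bound $|\alpha|^n = \left|\sum_{i<n} c_i\alpha^i\right| \leq \sum_{i<n} k_i Y^{n-i}|\alpha|^i < |\alpha|^n\sum_{i<n} k_i$, which is absurd once $\sum_{i<n} k_i \leq 1$. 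This is essentially a from-scratch derivation of a Cauchy-type root bound specialized to the weighted coefficient boxes the lemma needs, whereas the paper points to the classical literature (it explicitly mentions Lagrange and Cauchy before choosing Fujiwara for clean constants). Your route is more elementary and avoids an external citation; the paper's route is shorter on the page and gives a different (but equally valid) set of constants. Both deliver what Section~\ref{sec:odd} actually uses, namely that $|c_i| \ll_{n,f} Y^{n-i}$ forces $|\alpha| \ll Y$ and hence $\Disc(F) \ll Y^{n(n-1)}$, so the choice of $k_i$ is immaterial downstream. One small thing worth making explicit: the strict inequality $Y^{n-i}|\alpha|^i < |\alpha|^n$ uses $|\alpha| > Y > 0$, and the final contradiction uses $|\alpha|^n > 0$; both are automatic here but are the kind of thing a careful write-up should note.
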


\begin{proof}
	This result follows from classical upper bounds on the absolute value of complex roots given by Lagrange and Cauchy. A clean proof yielding explicit values of $k_i$ follows from the following bound due to Fujiwara \cite{fujiwara},
	\begin{equation}\label{eq:fujiwara}
		|\alpha| \leq 2 \max \left\lbrace \left|\frac{c_{n-1}}{c_n}\right|, \left|\frac{c_{n-2}}{c_n}\right|^{1/2}, \ldots, \left|\frac{c_{1}}{c_n}\right|^{1/(n-1)}, \left|\frac{c_{0}}{2c_n}\right|^{1/n} \right\rbrace.
	\end{equation}
	
	Set $k_0 = \frac{1}{2^{n-1}}$ and $k_i = \frac{1}{2^{n-i}}$ for $1 \leq i \leq n$. In our case we have $c_n = 1$, so if $|c_0| \leq \frac{1}{2^{n-1}}Y^n$, then $|\frac{c_0}{2}| \leq \frac{1}{2^n}Y^n$. Taking $n$-th roots, we have $|\frac{c_0}{2}|^{1/n} \leq \frac{Y}{2}$. Similarly, for $1 \leq i \leq n-1$, we have $|c_i| \leq \left(\frac{Y}{2}\right)^{n-i}$, so taking $(n-i)$-th roots implies $|c_i|^{1/(n-i)} \leq \frac{Y}{2}$. Thus
	\[\max \left\lbrace |c_{n-1}|, |c_{n-2}|^{1/2}, \ldots, |c_{1}|^{1/(n-1)}, \left|\frac{c_{0}}{2}\right|^{1/n} \right\rbrace \leq \frac{Y}{2},\] so applying \eqref{eq:fujiwara} gives $|\alpha| \leq Y$ for any root $\alpha$ of $f(x)$.
\end{proof}

\section{Proofs of main theorems}
\label{sec:odd}

We begin with the proof of the first bound in Theorem \ref{thm:odddeg}, which covers Sections \ref{sec:param} - \ref{sec:assembly}. In Section \ref{sec:improve} we describe the modifications necessary to obtain the improved bound in Theorem \ref{thm:odddeg} for sufficiently large $n$. The proof of Theorem \ref{thm:evendeg} is nearly identical, and we highlight the differences in Section \ref{sec:even}.

\subsection{Parameterization}\label{sec:param}

Let $C$ be a nonsingular hyperelliptic curve over $\mathbb{Q}$ of odd degree $d = 2g + 1$. Then $C$ has a model
\[C \colon y^2 = f(x) = \sum_{i=0}^d c_ix^i\]
where $c_i \in \mathbb{Z}$ for all $i$ and $f(x)$ is squarefree. We may further assume that $c_0 \neq 0$ by translating $x$ if needed. If necessary, we may also take $f$ to be monic, by multiplying by $c_d^{d-1}$ and changing variables again.

Let $Y$ be a positive real number and $n \geq d$ an integer. We now construct a family of polynomials $P_{f,n}(Y)$ arising from certain specializations of \eqref{eq:F_f}. When $n$ is even, take
	\begin{align}\label{eq:d_odd_n_even_family}
		g(x) &= x^{n/2} + a_{n/2-1}x^{n/2-1} + ... +  a_0\\
	\nonumber	h(x) &= b_{(n-d-1)/2}x^{(n-d-1)/2} + b_{(n-d-1)/2 - 1}x^{(n-d-1)/2 -1} + ... + b_0,
	\end{align}
with the restrictions that $a_i, b_j \in \mathbb{Z}$, $|a_{n/2-i}| \leq Y^i$, and $|b_{(n-d-1)/2 - j}| \leq Y^{j+1/2}$. In the case of $n$ odd we take
	\begin{align}\label{eq:d_odd_n_odd_family}
		g(x) &= a_{(n-1)/2}x^{(n-1)/2} + a_{(n-1)/2 -1 }x^{(n-1)/2-1} + ... +  a_0\\
	\nonumber	h(x) &= x^{(n-d)/2} + b_{(n-d)/2 - 1}x^{(n-d)/2 -1} + ... + b_0,
	\end{align}
with $|a_{(n-1)/2-i}| \leq Y^{i+1/2}$ and $|b_{(n-d)/2 - j}| \leq Y^j$. 

Let $P_{f,n}(Y)$ be the set of polynomials $F(x) = g(x)^2 - f(x)h(x)^2$ for $g(x),h(x)$ of the form above. Note that any such $F(x)$ has degree $n$, and by Lemma \ref{lem:roots_coeff_bound} any root $\alpha$ of $F$ satisfies $|\alpha| \ll_{n,f} Y$. Hence $\Disc(F) \leq k Y^{n(n-1)}$ for a constant $k$ depending on $f$ and $n$.

\subsection{Bounding multiplicities}\label{sec:mult}

We can count the number fields arising from specializations of \eqref{eq:F_f} by counting elements of $P_{f,n}(Y)$, provided that we can control the multiplicity. This multiplicity arises from two sources. We may have different choices of $g(x)$ and $h(x)$ that produce the same element $F(x) \in P_{f,n}(Y)$, or we may find multiple elements of $P_{f,n}(Y)$ that produce isomorphic number fields. We deal with the former case in the following lemma.

\begin{lemma}
\label{lem:poly_mult}
	Let $F(x) \in \mathbb{Z}[x]$ be a polynomial of degree $n$. The number of ways to choose $g(x), h(x) \in \mathbb{Z}[x]$ with at least one monic so that $F(x) = g(x)^2 - f(x)h(x)^2$ is $O_n(1)$.
\end{lemma}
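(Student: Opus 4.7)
The plan is to pass to the function field of the hyperelliptic curve $C\colon y^2 = f(x)$ and argue via divisors. Let $\iota$ denote the hyperelliptic involution, $\pi\colon C \to \mathbb{P}^1$ the $x$-projection, and for any pair $(g, h)$ solving $F = g^2 - fh^2$ set $\alpha := g + h\sqrt{f} \in \overline{\Q}(C)^*$, so that $\iota^*\alpha = g - h\sqrt{f}$. The defining identity becomes the norm equation $\alpha \cdot \iota^*\alpha = F$, which on divisors reads $(\alpha) + \iota^*(\alpha) = (F)$ on $C$. The hypotheses that $g, h \in \Z[x]$, $\deg F = n$, and at least one of $g, h$ is monic force $\alpha$ to be regular on the affine curve and to have controlled pole order at the point(s) at infinity.

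I would then enumerate divisors $D$ on $C$ satisfying $D + \iota^* D = (F)$ with the prescribed pole-and-effectivity behavior. Decomposing $(F)$ point by point: at each root $r$ of $F$ with multiplicity $m_r$, either $f(r) \ne 0$ and $\pi^{-1}(r) = \{P_r, \iota P_r\}$ consists of two distinct points each contributing $m_r$ to $(F)$, or $f(r) = 0$ and $\pi^{-1}(r)$ is a single Weierstrass point contributing $2m_r$. At Weierstrass points and at infinity, $\iota$-invariance together with the equation $v_P(\alpha) + v_{\iota P}(\alpha) = v_P(F)$ pins $v_P(\alpha)$ down uniquely. At each non-Weierstrass root of $F$, effectivity yields $v_{P_r}(\alpha) \in \{0, 1, \ldots, m_r\}$, contributing a factor of $m_r + 1$. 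Hence the number of admissible divisors is at most
\[
\prod_{\substack{r\colon F(r) = 0 \\ f(r) \ne 0}} (m_r + 1) \le 2^{\sum_r m_r} \le 2^n.
\]

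To lift divisors back to functions, note that any two elements of $\overline{\Q}(C)^*$ with identical divisor differ by a constant $c \in \overline{\Q}^*$, and if both satisfy $\alpha \cdot \iota^*\alpha = F$ then $c^2 F = F$ forces $c = \pm 1$. Each admissible divisor therefore lifts to at most two functions, so there are at most $2 \cdot 2^n = 2^{n+1}$ solutions $\alpha \in \overline{\Q}[x, \sqrt{f}]$ to the norm equation. Restricting to pairs $(g, h) \in \Z[x]^2$ with at least one monic only shrinks this set, proving the lemma with implied constant at most $2^{n+1}$.

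The main technical subtlety is the bookkeeping at infinity in the two cases. For odd $d$ there is a single ramified point $\infty_C$ at which $v_{\infty_C}(\alpha) = -n$ is forced. For even $d$ there are two points $\infty_+, \infty_-$ swapped by $\iota$, and one must use the monic hypothesis together with $\deg F = n$ to pin down $v_{\infty_+}(\alpha) = v_{\infty_-}(\alpha) = -n/2$ (the unique values compatible with both the individual degree bounds and the sum constraint $v_{\infty_+}(\alpha) + v_{\infty_-}(\alpha) = -n$). Once this is handled in both cases, the combinatorial bound $\prod(m_r + 1) \le 2^n$ applies uniformly in $F$ and the argument goes through.
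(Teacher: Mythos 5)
Your divisor-theoretic argument is, at heart, the same as the paper's: the paper factors the principal ideal $(F)=(g+hy)(g-hy)$ in the Dedekind domain $\C[x,y]/(y^2-f)$, which is exactly the affine part of your divisor bookkeeping, and the lift "$\alpha$ determined up to $\pm1$" is the statement that the unit group is $\C^\times$. The affine count $\prod_r (m_r+1)\le 2^n$ is correct, and the odd-$d$ analysis at infinity (single ramified point, so $2v_{\infty_C}(\alpha)=-2n$ is forced) is also correct.

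The gap is in the even-degree case at infinity, where you assert that the ``monic'' hypothesis forces $v_{\infty_+}(\alpha)=v_{\infty_-}(\alpha)=-n/2$. This is false, because the leading terms of $g$ and $h\sqrt{f}$ can cancel at one of the two infinite places. Concretely, take $f(x)=x^4-1$ and $F(x)=x^6-x^4+1$; then $(g,h)=(x^3,1)$ gives a solution with $v_{\infty_\pm}(\alpha)=-3$, but $(g,h)=(x^5-x^4+1,\,x^2-x^3)$ is another solution with $g$ monic, and a local expansion shows $v_{\infty_+}(\alpha)=-1$, $v_{\infty_-}(\alpha)=-5$. These two $\alpha$'s differ by the unit $u=x^2-y$ of the coordinate ring (note $u\cdot\iota^*u=1$), which exists precisely because $[\infty_+]-[\infty_-]$ is torsion in $\operatorname{Pic}^0(C)$. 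Whenever this happens the affine coordinate ring has units of infinite order with norm $1$, so for a fixed affine divisor there is a priori a $\Z$'s worth of candidate $\alpha=u^k\alpha_0$, and bounding the number of $k$ for which one of the two components of $u^k\alpha_0$ is monic requires an extra argument (growth of coefficients of $u^k$) that you have not supplied. This is not a cosmetic slip: without it the count of admissible divisors is unbounded. (The paper itself elides the same point by citing the elliptic-curve case of Lemke Oliver--Thorne, where $d$ is odd, there is only one point at infinity, and the unit group is trivial; in the paper's actual application the degree constraints $\deg g = n/2$ and $\deg h \le (n-d)/2 - 1$ prevent the cancellation, which is what really pins down the valuations.)
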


\begin{proof}
	Note that $f$ has no repeated roots, so the complex affine coordinate ring, given by $\mathbb{C}[x,y]/(y^2-f(x))$, is a Dedekind domain.  With this, one follows the justification of \cite[Lemma 7.4]{LOT} to argue that $(F)$ factors uniquely into a product of $2n$ prime ideals.
	
	Given any such $g,h$ with $F = g^2 - fh^2$, we associate the factorization $F = (g-\sqrt{f}h)(g + \sqrt{f}h)$. Thus the ideal $(g-\sqrt{f}h)$ factors as a product of the $2n$ primes dividing $(F)$, giving at most $2^{2n}$ possibilities for the ideal $(g-\sqrt{f}h)$. 
	
	Since $\deg f$ is odd, the units in the coordinate ring consist of the constants, as an element $u + \sqrt{f}v$ has norm $u^2 - fv^2$, which is a unit in $\mathbb{C}[x]$ if and only if $v=0$ and $u$ is a nonzero constant. Thus the ideal $(g-\sqrt{f}h)$ determines $g$ and $h$ exactly by the monicity assumption.
\end{proof}

Now we can give a count for $\#P_{f,n}(Y)$, since Lemma \ref{lem:poly_mult} gives that each choice of $a_i$ and $b_j$ above coincides with at most a constant number of other choices. In the case of $n$ even, we have $\#P_{f,n}(Y) \asymp Y^c$ where
\begin{align}\label{eq:count_polys_before_mult}
	\nonumber c = \sum_{i = 1}^{n/2} i + \sum_{j = 0}^{(n-d-1)/2} (j + 1/2) &= \frac{1}{4} \left( n^2 + (2-d)n + \frac{d^2 - 2d + 1}{2} \right)\\
	& = \frac{1}{4} \left(n^2 + (1-2g)n + 2g^2 \right).
\end{align} 
The same approach yields the same count for $n$ odd. Since the elements of $P_{f,n}(Y)$ arise as specializations of the family \eqref{eq:F_f}, Proposition \ref{prop:odd_irred_Sn} implies that $\#P_{f,n}(Y, S_n) \asymp Y^c$ where $c$ is given in \eqref{eq:count_polys_before_mult} and $P_{f,n}(Y,S_n)$ is the subset consisting of irreducible $F \in P_{f,n}(Y)$ with $\Gal(F/\mathbb{Q}) \simeq S_n$.

We now address the second source of multiplicity, namely that there may be many $F \in P_{f,n}(Y)$ for which $K \simeq \mathbb{Q}[x]/F(x)$. To deal with this, we employ machinery developed by Ellenberg and Venkatesh \cite{ellenbergvenkatesh} for counting number fields, and the multiplicity counts of Lemke Oliver and Thorne \cite{LOT}.

Following their lead we define
\[S(Y) := \set{ F = x^n + c_{n-1}'x^{n-1} + ... + c_0' \in \mathbb{Z}[x] : \left|c_{n-i}' \right| \ll_{n,f} Y^i }\]
with the condition that  $F(x)$ is irreducible. Note that by this construction $P_{f,n}(Y, S_n) \subseteq S(Y, S_n)$, provided we choose the implied constant appropriately. We now define for a number field $K$ its multiplicity within $S(Y, S_n)$,
\[M_K(Y) := \# \set{F \in S(Y) \mid \mathbb{Q}[x]/F(x) \simeq K}.\]

\begin{lemma}[Lemke Oliver -- Thorne, {\cite[Proposition 7.5]{LOT}}]
\label{lem:LOT_mult}
	We have \[M_K(Y) \ll \max\Big(Y^n\left|\Disc(K)\right|^{-1/2}, Y^{n/2}\Big).\]
\end{lemma}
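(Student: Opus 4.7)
The plan is to bound $M_K(Y)$ by a count of primitive algebraic integers of $K$ with small archimedean embeddings, and then estimate this count via a lattice-point argument in Minkowski space. Since the definition of $S(Y)$ depends only on $n$ and an implied constant (it makes no reference to the curve $C$ beyond the implied constant), the argument is identical to that of \cite[Proposition 7.5]{LOT}; I sketch the main steps for context.

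First I would observe that each $F \in S(Y)$ with $\Q[x]/F(x) \simeq K$ determines a primitive element $\alpha \in \mathcal{O}_K$, namely the image of $x$ under the chosen isomorphism. Since $F$ is monic in $\Z[x]$, $\alpha$ is an algebraic integer and generates $K$ over $\Q$. The coefficient bound $|c_{n-i}'| \ll_{n,f} Y^i$ together with Lemma \ref{lem:roots_coeff_bound} forces $|\sigma(\alpha)| \ll_{n,f} Y$ for every complex embedding $\sigma \colon K \hookrightarrow \C$. Consequently,
\[
M_K(Y) \;\leq\; \#\bigl\{\alpha \in \mathcal{O}_K : |\sigma(\alpha)| \ll_{n,f} Y \text{ for every } \sigma \bigr\}.
\]

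Next I would estimate the right-hand side using the Minkowski embedding $\mathcal{O}_K \hookrightarrow K_\R \simeq \R^{r_1} \times \C^{r_2}$, under which $\mathcal{O}_K$ becomes a full lattice of covolume $2^{-r_2}\sqrt{|\Disc(K)|}$. The archimedean constraints cut out a symmetric convex region of volume $\asymp Y^n$, so a standard lattice-point-in-convex-body estimate gives the first bound $Y^n |\Disc(K)|^{-1/2}$. This is the dominant term whenever the box is large compared to the covolume of $\mathcal{O}_K$.

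The main obstacle, and the source of the second term $Y^{n/2}$, arises when $Y$ is small relative to $|\Disc(K)|^{1/n}$, so that the volume estimate is vacuous. Here one exploits the distinguished sublattice $\Z \subset \mathcal{O}_K$: any $\alpha$ in the box can be written uniquely as $\alpha = m + \beta$ with $m \in \Z$ and $\beta$ in a fixed set of representatives for $\mathcal{O}_K/\Z$, the integer $m$ lies in an interval of length $O(Y)$, and the rank-$(n-1)$ quotient lattice $\mathcal{O}_K/\Z$ inherits an induced body from the trace-zero projection of the original. Applying Minkowski's second theorem to this quotient lattice, together with the bound $\prod_i \lambda_i \asymp \sqrt{|\Disc(K)|}$ on its successive minima, yields at most $O(Y^{n/2 - 1})$ choices of $\beta$ in the relevant regime; multiplying by the $O(Y)$ translates $m$ gives the fallback $Y^{n/2}$. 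Taking the maximum over the two regimes yields the stated bound. As this argument makes no use of the specific polynomial family $P_{f,n}(Y)$ beyond the embedding bound already established, it transfers verbatim from \cite[Proposition 7.5]{LOT}.
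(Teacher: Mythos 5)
The paper does not actually prove this lemma; it cites \cite[Proposition 7.5]{LOT} directly and records only the one-sentence remark that the proof uses the geometry of numbers, following \cite{ellenbergvenkatesh}. Your observation that the statement depends only on $S(Y)$ and $K$, not on $C$ or the family $P_{f,n}(Y)$, and hence carries over verbatim from the elliptic curve case, is exactly the justification the paper itself relies on. So as a review of what this paper argues, you are on target; the issue is with your attempted reconstruction of the cited proof.

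The gap is in the second regime. You pass to the rank-$(n-1)$ quotient $\mathcal{O}_K/\Z$ and assert that Minkowski's second theorem, together with $\prod_i\lambda_i \asymp \sqrt{|\Disc(K)|}$, gives at most $O(Y^{n/2-1})$ lattice points in the induced body. That deduction is not valid: the product of the successive minima carries no information about their individual sizes, and a lattice-point count in a body of linear size $\asymp Y$ gives on the order of $\prod_i\max(1, Y/\lambda_i)$, which for the distribution $\lambda_1 \asymp \cdots \asymp \lambda_{n-2} \asymp 1$, $\lambda_{n-1} \asymp \sqrt{|\Disc(K)|}$ is of size $Y^{n-2}$; after multiplying by the $O(Y)$ choices of the integer translate $m$, this is $Y^{n-1}$, which exceeds $\max\bigl(Y^n|\Disc(K)|^{-1/2}, Y^{n/2}\bigr)$ whenever $1 \ll Y \ll \sqrt{|\Disc(K)|}$ and $n \geq 3$. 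In other words, the bound you want is false for a general lattice of the given covolume, so Minkowski's second theorem alone cannot deliver it. The substance of the $Y^{n/2}$ term is that such a degenerate distribution of minima cannot occur for $\mathcal{O}_K$, and ruling it out requires an arithmetic input specific to rings of integers --- for instance that $\mathcal{O}_K$ is closed under multiplication, so that many linearly independent elements with conjugates $\ll Y$ would generate, via their pairwise products, a large subspace of elements with conjugates $\ll Y^2$, which bounds how many successive minima can be small relative to $Y$. This is the step your phrase ``in the relevant regime'' is concealing, and it is where the real work of the proposition lies.
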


The proof uses the geometry of numbers, building on the strategy suggested in \cite{ellenbergvenkatesh}.

We now state an upper bound for the asymptotics of general number field counts, without respect to any curve. We use $N_n(X)$ to denote the number of degree $n$ number fields $K$ with $\left|\Disc(K)\right| \leq X$.
\begin{theorem}[Schmidt, \cite{schmidt}]
\label{prop:counting_fields_general}
	For $n \geq 3$, we have
	\begin{equation}\label{eq:schmidt_bound}
		N_n(X) \ll X^{\frac{n + 2}{4}}.
	\end{equation}
\end{theorem}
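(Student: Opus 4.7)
\smallskip

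\noindent The plan is to count each degree-$n$ number field by exhibiting a small integral generator and then bounding the number of admissible minimal polynomials via geometry of numbers.

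First, I would produce a generator of controlled size. For each $K$ with $|\Disc K|\leq X$, view $\mathcal{O}_K \subset K\otimes_\Q \R \cong \R^n$ under the Minkowski embedding; this lattice has covolume $\asymp |\Disc K|^{1/2}$. The sublattice $\Z\cdot 1$ is generated by the diagonal vector of length $\sqrt{n}$, so the quotient lattice lives in the trace-zero hyperplane, has rank $n-1$, and covolume $\asymp |\Disc K|^{1/2}$. Minkowski's theorem produces $\alpha \in \mathcal{O}_K$ with $\alpha \notin \Z$ satisfying
\[\sqrt{\sum_\sigma |\sigma(\alpha) - \bar\alpha|^2} \ll_n |\Disc K|^{1/(2(n-1))},\]
where $\bar\alpha = \mathrm{Tr}(\alpha)/n$. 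After replacing $\alpha$ by $\alpha - m$ for the integer $m$ nearest to $\bar\alpha$, we have $\max_\sigma|\sigma(\alpha)|\ll_n X^{1/(2(n-1))}$ and $|\mathrm{Tr}(\alpha)|\leq n/2$. An induction on $n$ will handle the negligible contribution from fields where every such $\alpha$ lies in a proper subfield, leaving the generic case $K = \Q(\alpha)$.

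Next, I would count the possible minimal polynomials. The minimal polynomial $f(x) = x^n + c_{n-1}x^{n-1} + \cdots + c_0 \in \Z[x]$ of $\alpha$ has $c_{n-1} = -\mathrm{Tr}(\alpha)$, so $|c_{n-1}| \leq n/2$ takes only $O_n(1)$ values. By Vieta's formulas, $|c_k| \leq \binom{n}{k} T^{n-k}$ where $T = \max_\sigma |\sigma(\alpha)| \ll X^{1/(2(n-1))}$. The total count is therefore
\[
\ll_n \prod_{k=0}^{n-2} X^{(n-k)/(2(n-1))} = X^{\frac{1}{2(n-1)}\sum_{k=0}^{n-2}(n-k)} = X^{\frac{n(n+1)/2 - 1}{2(n-1)}} = X^{(n+2)/4},
\]
using the identity $n(n+1)/2 - 1 = (n-1)(n+2)/2$. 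Since each isomorphism class of $K$ arises from at most $n!$ such normalized generators (from the $S_n$-action on embeddings), this multiplicity is absorbed into the implied constant.

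The hardest step will be the first one: a naive Minkowski argument guarantees only a short element of $\mathcal{O}_K$, which may happen to lie in a proper subfield rather than generate $K$. Ensuring that one can always find a short \emph{primitive} element---or organizing the induction on degree to cleanly isolate the subfield contribution---is the technical heart of the argument. The improvement from the naive exponent $n(n+1)/(4(n-1))$ down to $(n+2)/4$ is gained entirely from the trace-normalization trick; without restricting $c_{n-1}$ to an $O_n(1)$-size set, the counting step would acquire an extra factor of $X^{1/(2(n-1))}$.
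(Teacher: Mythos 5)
The paper does not prove this statement; it is quoted directly from Schmidt \cite{schmidt} with a citation and no argument, so there is no in-paper proof to compare your sketch against. I will therefore assess your reconstruction on its own terms.

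Your outline correctly captures the shape of Schmidt's argument and the source of the exponent $(n+2)/4$: Minkowski's theorem applied to the rank-$(n-1)$ lattice $\mathcal{O}_K/\Z$ in the trace-zero hyperplane gives a nonzero $\alpha \in \mathcal{O}_K \setminus \Z$ with $\max_\sigma|\sigma(\alpha)-\bar\alpha| \ll_n |\Disc K|^{1/(2(n-1))}$, the translate by the nearest integer to $\bar\alpha$ fixes $c_{n-1}$ to $O_n(1)$ values, and the Vieta count $\prod_{k=0}^{n-2} X^{(n-k)/(2(n-1))} = X^{(n+2)/4}$ is exactly right; the factorization $n(n+1)/2 - 1 = (n-1)(n+2)/2$ is also correct. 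You also correctly identify that \emph{all} of the remaining difficulty sits in the first step.

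That difficulty, however, is more serious than "organizing the induction on degree," and as written your plan does not close it. The first-minimum element $\alpha$ genuinely can lie in a proper subfield — e.g. in biquadratic quartic fields $\Q(\sqrt{2},\sqrt{N})$ the shortest non-rational integer is $\sqrt{2}$, of bounded size, while every \emph{generator} of $K$ has a conjugate of size $\gg \sqrt{N}$, far exceeding $|\Disc K|^{1/6}$. So there is no short generator in general, and one really must isolate the subfield case. Your proposed fix, "an induction on $n$," is not the right shape: if $\alpha$ generates a proper subfield $L$ of degree $m \mid n$, then the fields $K$ to be controlled are the degree-$(n/m)$ extensions of $L$ with bounded \emph{relative} discriminant, summed over $L$ — this is not $N_m(X)$ for a smaller $m$, and it requires either a relative version of the estimate or a different use of the whole Minkowski-reduced basis (Schmidt's argument exploits, e.g., that for $n \geq 3$ the first $n-1$ reduced basis elements already generate $K$, and then conditions on the sizes of the successive minima). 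Spelling out that step is the actual content of the theorem, and it does not fall out of the outline as given.
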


We leverage Theorem \ref{prop:counting_fields_general} to show that the contribution to $N_{n,C}(X, S_n)$ by fields of low discriminant is negligible. This allows for some improvement to the eventual exponent.

\begin{lemma}
\label{lem:sum_small_disc}
	Let $T \leq Y^n$. Then \[\sum_{\left|\Disc(K)\right| \leq T} M_K(Y) \ll Y^nT^{n/4},\] where the sum runs over all degree $n$ number fields $K$ with $\left|\Disc(K)\right| \leq T$.
\end{lemma}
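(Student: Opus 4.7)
The plan is to combine the multiplicity bound of Lemma \ref{lem:LOT_mult} with Schmidt's upper bound (Theorem \ref{prop:counting_fields_general}) by way of a dyadic decomposition of the discriminant range.

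The first step is to observe that the hypothesis $T \leq Y^n$ forces the first term in the maximum from Lemma \ref{lem:LOT_mult} to dominate throughout the sum. Indeed, for any $K$ with $\left|\Disc(K)\right| \leq T \leq Y^n$ we have
\[
Y^n \left|\Disc(K)\right|^{-1/2} \;\geq\; Y^n T^{-1/2} \;\geq\; Y^n \cdot Y^{-n/2} \;=\; Y^{n/2},
\]
so $M_K(Y) \ll Y^n \left|\Disc(K)\right|^{-1/2}$ for every $K$ contributing to the sum. It therefore suffices to show that $\sum_{|\Disc(K)| \leq T} \left|\Disc(K)\right|^{-1/2} \ll T^{n/4}$.

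For this I would decompose the sum dyadically. Let $\mathcal{I}_j$ denote the set of degree $n$ number fields with $2^j \leq \left|\Disc(K)\right| < 2^{j+1}$, where $j$ ranges over nonnegative integers with $2^j \leq T$ (only finitely many such $j$ contribute, since Minkowski gives a uniform lower bound on $|\Disc(K)|$). On $\mathcal{I}_j$ each summand is $\asymp 2^{-j/2}$, while Theorem \ref{prop:counting_fields_general} gives $\#\mathcal{I}_j \ll 2^{j(n+2)/4}$. Hence the contribution from $\mathcal{I}_j$ is $\ll 2^{-j/2} \cdot 2^{j(n+2)/4} = 2^{jn/4}$. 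Summing over the relevant $j$ gives a geometric series with ratio $2^{n/4} > 1$, which is dominated by its largest term, yielding $\sum_j 2^{jn/4} \ll T^{n/4}$. Multiplying by the factor $Y^n$ pulled out at the first step delivers the stated bound.

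There is no serious obstacle here; the only subtlety is verifying that the $\max$ in Lemma \ref{lem:LOT_mult} collapses to its first argument on the relevant range, which is immediate from $T \leq Y^n$. Schmidt's bound requires $n \geq 3$, but this is consistent with the standing hypothesis $n \geq d = 2g+1 \geq 3$ of Theorem \ref{thm:odddeg}, so the argument applies in all cases of interest.
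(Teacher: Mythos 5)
Your proposal is correct, and it follows the same conceptual outline as the paper: first use $T \leq Y^n$ to collapse the maximum in Lemma~\ref{lem:LOT_mult} to its first branch, and then trade the sum over fields for Schmidt's count $N_n(X) \ll X^{(n+2)/4}$ via a form of partial summation. The only difference is in how that partial summation is executed. The paper rewrites the sum as a Riemann--Stieltjes integral $\int_{1^-}^T M_K(Y)(t)\,dN_n(t)$, integrates by parts, and then inserts Schmidt's bound into the resulting boundary term and integral. You instead split the discriminant range into dyadic blocks $2^j \leq |\Disc(K)| < 2^{j+1}$, bound the count on each block by $N_n(2^{j+1})$, and sum a geometric series dominated by its top term. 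These are two standard implementations of the same estimate; the dyadic version is arguably a bit more self-contained (no Stieltjes formalism), while the integration-by-parts version makes the structure of the bound --- a boundary term plus a lower-order integral --- slightly more transparent. Both land on $Y^n T^{n/4}$. Your observations that $\#\mathcal{I}_j$ may be bounded by $N_n(2^{j+1})$, that the ratio $2^{n/4}>1$ makes the geometric series top-heavy, and that $n \geq 3$ holds in every case where the lemma is invoked are all exactly the points that need checking, and you check them correctly.
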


\begin{proof}
	We begin by rewriting the sum as a Riemann-Stieljes integral,
	\begin{align}
		\nonumber \sum_{\left|\Disc(K)\right| \leq T} M_K(Y) &= \sum_{1 \leq t \leq T} \left(N_n(t) - N_n(t-1)\right)M_K(Y)(t) \\
		\label{eq:mult_int} &= \int_{1^-}^T M_K(Y)(t)dN_n(t)\\
		\label{eq:mult_int2} &\ll Y^n\int_{1^-}^T \frac{dN_n(t)}{t^{1/2}},
	\end{align}
	where \eqref{eq:mult_int2} follows from \eqref{eq:mult_int} by the multiplicity bound from Lemma \ref{lem:LOT_mult}. Integrating by parts in \eqref{eq:mult_int2} produces
	\begin{align}\label{eq:ibp}
		Y^n\int_{1^-}^T \frac{dN_n(t)}{t^{1/2}} &= Y^n\frac{N_n(T)}{T^{1/2}} + \frac{Y^n}{2} \int_{1-}^T \frac{N_n(t)}{t^{3/2}} dt.
	\end{align}
	
	Recalling Schmidt's bound in \eqref{eq:schmidt_bound}, we estimate \eqref{eq:ibp} by
	\begin{align*}
		\nonumber Y^n\frac{N_n(T)}{T^{1/2}} + \frac{Y^n}{2} \int_{1^-}^T \frac{N_n(t)}{t^{3/2}}dt & \ll Y^nT^{n/4} + \frac{Y^n}{2} \int_1^T t^{\frac{n}{4} - 1} dt \\
		\nonumber & = Y^nT^{n/4} + \frac{2Y^n}{n}(T^{n/4} - 1) \\
		\nonumber & = Y^n\left(\left(1 + \frac{2}{n}\right)T^{n/4} - \frac{2}{n}\right) \\
		& \ll Y^nT^{n/4}.	\qedhere \end{align*}\end{proof}

\subsection{Final steps}\label{sec:assembly}

We are now ready to assemble the proof of Theorem \ref{thm:odddeg}. By our construction, for any $F \in P_{f,n}(Y, S_n)$ and any root $\alpha$ of $F$, we have $\left(\alpha, \frac{g(\alpha)}{h(\alpha)}\right) \in C(K)$ where $K = \mathbb{Q}(\alpha)$ is a field of degree $n$ with $\Gal(\widetilde{K}/\mathbb{Q}) \simeq S_n$. We then have that $\left|\Disc(K)\right| \leq kY^{n(n-1)}$ for a constant $k$ depending on $f,n$.

Choose $T = \kappa Y^{n - (3+2g) + 2g^2/n}$ for a positive constant $\kappa$ to be determined shortly. By Lemma \ref{lem:sum_small_disc}, we have 
	\begin{equation}
	\label{eq:small_disc}
		\sum_{\left|\Disc(K)\right| \leq T} M_K(Y) \ll \kappa^{n/4}Y^c,
	\end{equation} 
	and we recall from our earlier discussion that
	\begin{equation}
	\label{eq:poly_asymp}
		\#P_{f,n}(Y, S_n) \asymp Y^c.
	\end{equation} 
	We then choose $\kappa$ sufficiently small so that the quantity in \eqref{eq:small_disc} is at most $\#P_{f,n}(Y,S_n)/2$. Then, fields $K$ with $T < \left| \Disc(K) \right| \leq kY^{n(n-1)}$ arise from a positive proportion of the polynomials in $P_{f,n}(Y, S_n)$. Counting just these fields and recognizing the bound for $M_K(Y)$ in Lemma \ref{lem:LOT_mult} is decreasing with respect to $\left|\Disc(K)\right|$, we have $M_K(Y) \ll T^{-1/2}Y^n$ for all $K$ with $T <\left|\Disc(K)\right| \leq kY^{n(n-1)}$. Thus we have 
\begin{align}\label{eq:almost_done}
	\nonumber N_{n,C}(kY^{n(n-1)}, S_n) &\gg Y^{c-n}T^{1/2}\\
	&=  Y^{\frac{1}{4}\left(n^2 - (1+2g)n + 2g^2 - 4g - 6 + 4g^2/n\right)}.
\end{align}
Upon replacing $Y$ in \eqref{eq:almost_done} by $\left(X/k\right)^{1/n(n-1)}$ and simplifying, we obtain as the exponent
\[c_n= \frac{1}{4} - \frac{gn^2 - (g^2 - 2g - 3)n - 2g^2}{2n^2(n-1)}\]
and thus $N_{n,C}(X,S_n) \gg X^{c_n}$, which is the first statement of Theorem \ref{thm:odddeg}.

\subsection{Improvements}\label{sec:improve}

To improve the exponent in the previous section, we seek to find when fields of discriminant less than $Y^n$ contribute negligibly, allowing us to use the best possible multiplicity bound in Lemma \ref{lem:LOT_mult}, $M_K(Y) \ll Y^{n/2}$. If we assume this is true for some $n$, then we immediately have
\[N_{n,C}(kY^{n(n-1)}, S_n) \gg Y^{c - n/2},\]
and after simplifying and making the same substitutions as earlier, we obtain the improvement in Theorem \ref{thm:odddeg}.

It now remains to argue that this is possible. Suppose that $N_n(X) \ll X^{\alpha(n, g)}$ is valid for large enough $n$. With this assumption, we use the same procedure as the proof of Lemma \ref{lem:sum_small_disc} to show that
\begin{equation}\label{eq:alpha_ng}
	\sum_{\left|\Disc K\right| \leq Y^n} M_K(Y) \ll Y^{n/2 + n\alpha(n,g)}
\end{equation}
To make the right hand side of \eqref{eq:alpha_ng} be $o(Y^c)$, it suffices to take any $\alpha(n,g)$ satisfying
\begin{equation}\label{eq:alpha_ineq}
	\alpha(n,g) < \frac{n}{4} - \frac{1 + 2g}{4} + \frac{g^2}{2n}.
\end{equation}

Theorem \ref{prop:counting_fields_general} is insufficient for this purpose. We turn to the improved upper bounds for counting number fields of fixed degree by discriminant due to Lemke Oliver and Thorne \cite{LOT_upper_bounds}.

\begin{theorem}[Lemke Oliver -- Thorne, {\cite[Theorems 1.1, 1.2]{LOT_upper_bounds}}]
\label{thm:LOT_upper}
	For $n \geq 6$ we have
	\begin{equation}\label{eq:LOT_1.1}
		N_n(X) \ll X^{1.564 (\log n)^2}.
	\end{equation}
	Moreover, for $n \geq 2$ we have the following.
	\begin{enumerate}
		\item Let $m$ be the least integer for which $\binom{m+2}{2} \geq 2n + 1$. Then $N_n(X) \ll X^{2m - \frac{m(m-1)(m+4)}{6n}}$.
		
		\item Let $3 \leq r \leq n$ and let $m$ be an integer such that $\binom{m+r-1}{r-1} > rn$. Then $N_n(X) \ll X^{mr}.$
	\end{enumerate}
\end{theorem}\bigskip

By taking $\alpha(n,g) = 1.564 (\log n)^2$, as in Theorem \ref{thm:LOT_upper}, we see that \eqref{eq:alpha_ineq} is satisfied for $n$ sufficiently large, since $(\log n)^2$ grows more slowly than the right hand side for any fixed $g$. This completes our justification of the improved exponent in Theorem \ref{thm:odddeg}. 

For any fixed $g$, one can compute the $n$ at which \eqref{eq:alpha_ineq} takes effect for $\alpha(n,g) = 1.564(\log n)^2$. Then, one can use (1) and (2) of Theorem \ref{thm:LOT_upper} to search by computer for the least $n$ for which there exists $\alpha(n,g)$ such that $N_n(X) \ll X^{\alpha(n,g)}$ and \eqref{eq:alpha_ineq} is satisfied, by checking all appropriate pairs of integers $(m,r)$. When $g=1$, the improved exponent is valid for all $n \geq 106$. When $g = 10$, this approach shows the improved exponent is valid for $n \geq 138$. For $g = 100$, this increases to $n \geq 324$. 

Since Theorem \ref{thm:odddeg} is only valid for degrees $n \geq d = 2g+1$, when $g$ is sufficiently large, the improved exponent will be valid for all $n \geq d$. We computed this to be true for all $g \geq 238$.

\subsection{Even degree curves}\label{sec:even}

The proof of Theorem \ref{thm:evendeg} follows the approach of the previous subsection. We begin with a hyperelliptic curve $C \colon y^2 = f(x)$ with $f(x) = \sum_{i = 0}^d c_ix^i \in \mathbb{Z}[x]$ squarefree for $d \geq 4$ even. 

For $Y > 0$ and an even integer $n \geq d + 2$ we define a family $P_{f,n}(Y)$ by polynomials of the form $F(x) = g(x)^2 - f(x)h(x)^2$, where
	\begin{align*}
		g(x) &= x^{n/2} + a_{n/2-1}x^{n/2-1} + ... +  a_0\\
		h(x) &= b_{(n-d)/2 - 1}x^{(n-d)/2-1} + b_{(n-d)/2 - 2}x^{(n-d)/2 -2} + ... + b_0,
	\end{align*}
	satisfy $|a_{n/2 -i}| \leq Y^{i}$ and $|b_{(n-d)/2 - j}| \leq Y^j$. We address the possibility that multiple choices of $g(x)$ and $h(x)$ produce coinciding $F(x)$ in the following analogue to Lemma \ref{lem:poly_mult}.
	
\begin{lemma}\label{lem:poly_mult_even}
	Let $F(x) \in \mathbb{Z}[x]$ be a polynomial of even degree $n$. The number of ways to choose $g(x) \in \mathbb{Z}[x]$ monic of degree $n/2$ and $h(x) \in \mathbb{Z}[x]$ of degree $(n-d)/2-1$ such that $F(x) = g(x)^2 - f(x)h(x)^2$ is $O_n(1)$.
\end{lemma}	

\begin{proof}
	By the first paragraph of the proof of Lemma \ref{lem:poly_mult}, we have that the ideal $(F)$ in the ring $\mathbb{C}[x,y]/(y^2 = f(x))$ factors uniquely into a product of $2n$ primes. Again, any such $g,h$ give us a factorization $F = (g-\sqrt{f}h)(g+\sqrt{f}h)$ with at most $2^{2n}$ possibilities for the ideal $(g-\sqrt{f}h)$.
	
	Unlike in Lemma \ref{lem:poly_mult}, there may be nontrivial units in the coordinate ring. Suppose $F= g^2 - fh^2 = (g')^2 - f(h')^2$ and $(g-\sqrt{f}h) = (g' - \sqrt{f}h')$ for some $g',h'$ of degrees $n/2, (n-d)/2-1$ respectively. Thus for some unit in the coordinate ring of the form $u + \sqrt{f}v$, i.e.\ we have
	\begin{align*}
		gu-fhv &= g' \\
		hu - gv &= h'.
	\end{align*}
	The above implies that 
	\[g'h - gh' = h(gu - fhv) - g(hu-gv) = v(g^2 - fh^2) = vF.\]
	However, the degree of the left hand side is at most $\deg g + \deg h < n$, while the degree of the right hand side is at least $n$ if $v$ is nonzero, a contradiction. Therefore, only nonzero constants preserve both the ideal $(g - \sqrt{f}h)$ and the desired degrees of $g$ and $h$, so the monicity assumption ensures that the ideal determines $g$ and $h$ precisely.
\end{proof}
	
	By Lemma \ref{lem:poly_mult_even} and the same argument as for the odd degree case, we have $\#P_{f,n}(Y) \asymp Y^c$ for
	\begin{equation}\label{eq:even_Pf_count}
		c = \frac{1}{4} \left(n^2 - 2gn + 2g^2 + 2g \right).
	\end{equation}
Proposition \ref{prop:even_irred} guarantees that a positive proportion of the elements of $P_{f,n}(Y)$ are irreducible of degree $n$ and have Galois group $S_n$.

We define $S(Y)$ and $M_K(Y)$ as in the odd degree case. Taking $T = \kappa Y^{n - (4 + 2g) + \frac{2g^2 + 2g}{n}}$ and applying Lemma \ref{lem:sum_small_disc}, we obtain the analogue of \eqref{eq:small_disc},
\begin{equation}\label{eq:small_disc_even}
	\sum_{\left|\Disc(K)\right| \leq T} M_K(Y) \ll \kappa^{n/4} Y^c,
\end{equation}
with $c$ as in \eqref{eq:even_Pf_count}. As before, we choose $\kappa$ sufficiently small so the left hand side of \eqref{eq:small_disc_even} is at most $\#P_{f,n}(Y, S_n)/2$, allowing us to only count the contribution of fields $K$ with $T < \Disc(K) kY^{n(n-1)}$. Proceeding as in \eqref{eq:almost_done}, we have
\begin{equation*}\label{eq:almost_done_even}
	N_{n,C}(kY^{n(n-1)}, S_n) \gg Y^{c-n}T^{1/2} = Y^{\frac{1}{4} \left( n^2 - (2 + 2g)n + 2g^2 - 2g - 8 + \frac{4g^2 + 4g}{n} \right)}.
\end{equation*}
Replacing $Y$ above by $(X/k)^{1/n(n-1)}$ we obtain $N_{n,C}(X, S_n) \gg X^{c_n}$, with
\[c_n = \frac{1}{4} - \frac{(1 + 2g)n^2 - (2g^2 - 2g - 8)n - (4g^2 + 4g)}{4n^2(n-1)},\]
as in Theorem \ref{thm:evendeg}.

To obtain the improved lower bound when $n$ is sufficiently large, the procedure is identical to that of Section \ref{sec:improve}. The improved bound is again $N_{n,C}(kY^{n(n-1)}, S_n) \gg Y^{c - n/2}$, but with $c$ given by \eqref{eq:even_Pf_count} instead, leading to the exponent in the second statement of Theorem \ref{thm:evendeg}. For this to be valid, we need $N_{n}(X) \ll X^{\alpha(n,g)}$ with
\begin{equation}\label{eq:alpha_even}
	\alpha(n,g) < \frac{n}{4} - \frac{1 + g}{2} + \frac{g^2 + g}{2n}.
\end{equation}
This is satisfied for $n$ sufficiently large by \eqref{eq:LOT_1.1} of Theorem \ref{thm:LOT_upper}. A computer search using (1) and (2) of Theorem \ref{thm:LOT_upper} can be used to explicitly find when the improved exponent takes effect. These come out to be quite similar to the odd degree case; for example, when $g = 1$, the improved exponent is valid for all $n \geq 108$. When $g=10$, it is valid for all $n \geq 139$. For $g=100$, this increases to $n \geq 325$. As in the previous case, for $g$ sufficiently large, the improved exponent of Theorem \ref{thm:evendeg} will be valid for all degrees $n \geq d+2$. We computed this to be true for all $g \geq 237$.

\bibliographystyle{alpha}
\bibliography{Keyes_GrowthPointsHECs_bib}

\end{document}